\documentclass[12pt]{amsart}
\usepackage{amsmath,latexsym,amscd,amsbsy,amssymb,amsfonts,amsthm,fleqn,leqno,
euscript, graphicx, texdraw, pb-diagram}
\usepackage{mathrsfs}
\usepackage[matrix,arrow,curve]{xy}

\newtheorem{thm}{Theorem}[section]
\newtheorem{pro}[thm]{Proposition}
\newtheorem{lem}[thm]{Lemma}

\newtheorem{cor}[thm]{Corollary}

\def\leukfrac#1/#2{\leavevmode
               \kern.1em
                \raise.9ex\hbox{\the\scriptfont0 ${}_#1$}
                \hskip -1pt\kern-.1em
                /\kern-.15em\lower.10ex\hbox{\the\scriptfont0 ${}_#2$}}

\theoremstyle{definition}

\theoremstyle{remark}

\makeatletter
\def\Int{\mathop{\operator@font Int}\nolimits}
\makeatother

\hyphenation{Ma-zur-kie-wicz}



\begin{document}

\title[Proper absolute extensors]
{Proper absolute extensors}

\author{Vesko  Valov}
\address{Department of Computer Science and Mathematics, Nipissing University,
100 College Drive, P.O. Box 5002, North Bay, ON, P1B 8L7, Canada}
\email{veskov@nipissingu.ca}
\thanks{The second author was partially by NSERC Grant 261914-19}

\keywords{absolute proper extensor for $n$-dimensional spaces, $\rm{DD^nP}$-property, $n$-dimensional Menger compactum}

\subjclass{Primary 54C20; Secondary 54F45}


\begin{abstract}
We describe the proper absolute (neighborhood) extensors for the class of at most $n$-dimensional spaces, notation $\rm{A(N)E}_p(n)$.
For example, the unique locally compact $n$-dimensional separable metric space $X\in\rm{AE}_p(n)$ satisfiyng the $\rm{DD^nP}$-property is the $n$-dimensional Menger compactum without a point. Non-metrizable $\rm{A(N)E}_p(n)$-spaces are also described.
\end{abstract}

\maketitle

\markboth{}{Proper extensors}



\section{Introduction and preliminary results}
In this note we describe the proper absolute extensors for finite-dimensional spaces, see Theorem 2.4 and Theorem 3.2. Recall that a map $f: X\to Y$ is proper if $f^{-1}(K)$ is compact for every compact $K\subset Y$. Note that if $X, Y$ are locally compact, then $f$ is proper iff it is closed and all fibres $f^{-1}(y)$, $y\in Y$, are compact. Proper and closed extensions of maps were considered by different authors, see Michael \cite{m}, Nowi\'{n}ski \cite{n}. Our results are closer to Chigogidze's ones from \cite{chi}, where proper absolute
(neighborhood) extensors were introduced and studied.

We say that a locally compact space $X$ is a {\em proper absolute neighborhood extensor for the class of at most $n$-dimensional spaces} (notation $X\in\rm{ANE}_p(n)$) if every proper map $f:A\to X$, where $A$ is a closed subset of a locally compact Lindel\"{o}f-space $Y$ with $\dim Y\leq n$, admits a perfect extension $\widetilde{f}$ over a closed neighborhood of $A$ in $Y$. When $f$ admits a proper extension over $Y$, we say $X$ is
a {\em proper absolute extensor for the class of at most $n$-dimensional spaces} (notation $X\in\rm{AE}_p(n)$).
Since every space admitting a proper map into a compact space is compact, it follows from the definition that there is no compact $\rm{AE}_p(n)$-space. In particular, the $n$-sphere $\mathbb S^n$, which is an absolute extensor for the $n$-dimensional spaces, is not an $\rm{AE}_p(n)$.

If, in the above definition, $Y$ is metric and we drop the requirement for $f$ and $\widetilde f$ to be proper maps, we obtain the definition of absolute (neighborhood) extensors (notation $\rm{A(N)E}(n)$) for the class of at most $n$-dimensional spaces. It is well known \cite{bo} that a metric space $X$ is an $\rm{ANE}(n)$ iff $X$ is $\rm{LC}^{n-1}$. Moreover, if in addition, $X$ is $\rm C^{n-1}$, then $X\in\rm{AE}(n)$.
Recall that a space $X$ is $\rm{LC}^n$ if for every $x\in X$ and its neighborhood $U$ in $X$ there is another neighborhood $V$ of $x$ such that $V\stackrel{m}{\hookrightarrow}U$ for all $m\leq n$ (here $V\stackrel{m}{\hookrightarrow}U$ means that $V\subset U$ and every map from $m$-dimensional sphere $\mathbb S^m$ into $V$ can be extended to a map $\mathbb B^{m+1}\to U$ over the $(m+1)$-dimensional cub $\mathbb B^{m+1}$). We also say that a set $A\subset X$ is
$k-\rm{LCC}$ in $X$ if for every point $x\in A$ and its neighborhood $U$ in $X$ there exists another neighborhood $V$ of $x$ with $V\setminus A\stackrel{k}{\hookrightarrow}U\setminus A$. If $A$ is $k-\rm{LCC}$ in $X$ for all $k\leq n$, then $A$ is said to be $\rm{LCC}^n$ in $X$.

\section{Second countable $\rm{AE}_p(n)$-spaces}

Everywhere in this section by a space, if not explicitly said otherwise, we mean a locally compact separable metric space. By $C(Z,X)$ we denote the set of all continuous maps from $Z$ to $X$ equipped with the compact-open topology.
A close subset $A\subset X$ is said to be a {\em $Z_n$-set in $X$} \cite{vm} if the set $C(\mathbb B^{n}, X\backslash A$ is dense in
$C(\mathbb B^{n},X)$. The following description of $Z_n$-sets in metric spaces is well known, but we couldn't find a reference.
\begin{lem}\label{z-sets}
Let $(X,d)$ be a metric $\rm{LC}^{n-1}$-space and $A$ be a closed nowhere dense set in $X$. Then $A$ is a  $Z_n$-set in $X$ iff $A$ is $\rm{LCC}^{n-1}$.
\end{lem}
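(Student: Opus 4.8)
The plan is to prove both implications by working locally, since both the $Z_n$-set property and the $\mathrm{LCC}^{n-1}$ condition have local character once we know $X$ is $\mathrm{LC}^{n-1}$. For the implication $(\Leftarrow)$, assume $A$ is $\mathrm{LCC}^{n-1}$ and let $f\in C(\mathbb B^n,X)$ and $\varepsilon>0$ be given; I want to $\varepsilon$-approximate $f$ by a map into $X\setminus A$. First I would cover $f(\mathbb B^n)$ by finitely many open sets $U_1,\dots,U_k$ of diameter $<\varepsilon$ and, using both $\mathrm{LC}^{n-1}$ and $\mathrm{LCC}^{n-1}$, shrink them to $V_i\subset U_i$ with the two "homotopy-into" properties: every $\le(n-1)$-sphere in $V_i$ bounds in $U_i$, and every $\le(n-1)$-sphere in $V_i\setminus A$ bounds in $U_i\setminus A$. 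Then triangulate $\mathbb B^n$ finely enough that each closed simplex maps into a single $V_i$, and push the map off $A$ skeleton by skeleton: first move the images of vertices into $X\setminus A$ (possible since $A$ is nowhere dense), then extend over $1$-simplices using that the images of their two endpoints lie in some $V_i\setminus A$ and $V_i\setminus A\stackrel{0}{\hookrightarrow}U_i\setminus A$, and continue inductively; the top-dimensional extension over an $n$-simplex uses $V_i\setminus A\stackrel{n-1}{\hookrightarrow}U_i\setminus A$. At each stage the new map stays within $U_i$ of the old on the relevant simplex, so the total displacement is controlled by $\varepsilon$, and the final map lands in $X\setminus A$.

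For the implication $(\Rightarrow)$, assume $A$ is a $Z_n$-set and fix $x\in A$ and a neighborhood $U$ of $x$; I need a neighborhood $V$ of $x$ with $V\setminus A\stackrel{k}{\hookrightarrow}U\setminus A$ for all $k\le n-1$. Choose, via $\mathrm{LC}^{n-1}$, a neighborhood $W$ of $x$ with $W\subset U$ and $W\stackrel{m}{\hookrightarrow}U$ for all $m\le n-1$, then shrink once more to $V\subset W$ with $V\stackrel{m}{\hookrightarrow}W$. Given a map $g\from\mathbb S^k\to V\setminus A$ with $k\le n-1$, extend it (using $V\stackrel{k}{\hookrightarrow}W$) to $\bar g\from\mathbb B^{k+1}\to W$; this $\bar g$ may hit $A$, but since $\mathbb B^{k+1}$ has dimension $\le n$, the $Z_n$-set property lets me approximate $\bar g$ rel a neighborhood of $\mathbb S^k$—using the standard trick of first homotoping $\bar g$ so it is already off $A$ near the boundary, then applying the density of $C(\mathbb B^{k+1},X\setminus A)$ only on the interior—by a map $\mathbb B^{k+1}\to W\setminus A\subset U\setminus A$ that still agrees with $g$ on $\mathbb S^k$. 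Composing the approximation with the original $g$ on the collar gives the required filling in $U\setminus A$.

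The main obstacle is making the boundary-rel approximation in the $(\Rightarrow)$ direction rigorous: the definition of $Z_n$-set only gives density of $C(\mathbb B^n,X\setminus A)$ in $C(\mathbb B^n,X)$, with no a priori control on the boundary, so I must first deform $\bar g$ within $W$ to a map that sends a collar $\mathbb S^k\times[0,1]$ of the boundary into $W\setminus A$ (using again that $V\setminus A\ni g$ and $A$ is nowhere dense and $\mathrm{LCC}$—or rather derived local connectedness of the complement at this preliminary stage), and then apply $Z_n$-approximation on the complementary sub-ball so small that the resulting map, reglued with the unchanged collar, is continuous and lands in $W\setminus A$. A similar but easier bookkeeping issue in the $(\Leftarrow)$ direction is ensuring the skeleton-by-skeleton pushing is compatible on the overlaps of simplices, which is handled by doing the induction over the skeleta of the whole triangulation at once rather than simplex by simplex. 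Nowhere density of $A$ is used precisely to start the induction (moving vertices off $A$) and is exactly what fails if $A$ has interior, explaining the hypothesis.
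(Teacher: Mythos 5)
Your argument for the substantive direction ($\mathrm{LCC}^{n-1}\Rightarrow Z_n$) is the classical skeletal one, and it genuinely differs from the paper's. The paper first invokes Dugundji--Michael to replace $d$ by a metric in which the co-connectivity is uniform (for every $\varepsilon>0$ there is $\delta>0$ with $B_\delta(x)\setminus A\stackrel{m}{\hookrightarrow}B_\varepsilon(x)\setminus A$ for all $x$ and all $m\le n-1$), then fixes a tower $\varepsilon_0\stackrel{0}{\hookrightarrow}\varepsilon_1\stackrel{1}{\hookrightarrow}\cdots\stackrel{n-1}{\hookrightarrow}\varepsilon_n=\eta$, forms the set-valued maps $\varphi_m(y)=B_{\varepsilon_m}(f(y))\setminus A$, and gets the approximating map $g$ into $X\setminus A$ as a continuous selection via Gutev's theorem. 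Your triangulate-and-push-off-$A$ induction is what that selection theorem does under the hood, so both routes are legitimate; the selection-theoretic one outsources the bookkeeping you worry about, while yours is self-contained and more elementary in its prerequisites.

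Two points in your write-up need repair. First, a single shrink $V_i\subset U_i$ is not enough for the skeletal induction: after the $(j-1)$-skeleton has been pushed off $A$, the image of the boundary of a $j$-simplex has drifted and need no longer lie in any one $V_i\setminus A$, so you cannot apply $V_i\setminus A\stackrel{j-1}{\hookrightarrow}U_i\setminus A$ at the next stage. You need a tower of $n+1$ successively controlled refinements, one per skeleton, with each stage's output guaranteed to sit inside a single member of the next stage's cover; this is exactly what the paper's chain $\varepsilon_0\stackrel{0}{\hookrightarrow}\cdots\stackrel{n-1}{\hookrightarrow}\varepsilon_n$ encodes, and your "do the induction over the skeleta of the whole triangulation at once" does not by itself supply it. Second, in the converse direction ($Z_n\Rightarrow\mathrm{LCC}^{n-1}$, which the paper merely asserts), "so small that the resulting map, reglued with the unchanged collar, is continuous" cannot be literally correct: however small the approximation error, the approximating map need not agree with $g$ on the interface, so regluing is discontinuous. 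The fix is the standard one: since $g(\mathbb S^k)$ is compact and at positive distance from $A$, and $X\setminus A$ is open in the $\mathrm{LC}^{n-1}$ space $X$ and hence itself $\mathrm{LC}^{n-1}$, two sufficiently close maps of $\mathbb S^k$ into $W\setminus A$ are joined by a small homotopy inside $W\setminus A$; insert that homotopy on the collar between $g$ and the restriction of the approximation. With these two standard corrections your proof goes through.
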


\begin{proof}
The sufficiency follows from the properties of metric $\rm{LC}^{n-1}$-spaces and the definition of $Z_n$-sets.
 Suppose $A$ is $\rm{LCC}^{n-1}$,
$f:\mathbb B^n\to X$ is a given map and $\eta>0$. We consider the following property for every $x\in X$: if $U$ is a neighborhood of $x$ in $X$, then there is another neighborhood $V\subset U$ such that $V\setminus A\stackrel{n-1}{\hookrightarrow}U\setminus A$. Because $A$ is
 $\rm{LCC}^{n-1}$ and $X$ is $\rm{LC}^{n-1}$, every $x\in X$ has that property. So, by \cite{dm} we can assume that the metric $d$ satisfies the following condition: To every $\varepsilon>0$ there corresponds a $\delta>0$ such that $B_\delta(x)\setminus A\stackrel{m}{\hookrightarrow} B_\varepsilon(x)\setminus A$ for every $x\in X$ and every $m\leq n-1$. Here $B_\delta(x)$ denotes the open ball in $X$ with center $x$ and a radius $\delta$.
We write
$\delta\stackrel{m}{\hookrightarrow}\varepsilon$ to denote that $B_\delta(x)\setminus A\stackrel{m}{\hookrightarrow} B_\varepsilon(x)\setminus A$ for every $x\in X$. We choose a finite sequence $\{\varepsilon_m\}_{m\leq n}$ with $\varepsilon_m\stackrel{m}{\hookrightarrow}\varepsilon_{m+1}$ for every $0\leq m\leq n-1$ and $\varepsilon_n=\eta$. Next, define the set-valued maps $\varphi_m:\mathbb B^n\rightsquigarrow X$ by $\displaystyle\varphi_m(y)=B_{\varepsilon_m}(f(y))\setminus A$, $0\leq m\leq n$. Since $A$ is $\rm{LCC}^{n-1}$, $\varphi_m(y)\neq\varnothing$ for all $y\in\mathbb B^n$.
It is easily seen that each $\varphi_m$ has the following property: If $K$ is a compact subset of $\varphi_m(y_0)$ for some $y_0\in\mathbb B^n$, then there is a neighborhood $O(y_0)\subset\mathbb B^k$ with $K\subset\varphi_m(y)$ for all $y\in O(y_0)$. According to
\cite{g}, there exists a map $g:\mathbb B^n\to X$ with $g(y)\in\varphi_n(y)$ for all $y\in\mathbb B^n$. This means that $g$ maps $\mathbb B^n$ into $X\setminus A$ and $d(f(y),g(y))<\eta$ for every $y\in\mathbb B^n$. Hence, $A$ is a  $Z_n$-set in $X$.
\end{proof}

For every locally compact space $X$ let $\omega X=X\cup\{\omega\}$ be the one-point compactification of $X$.
\begin{lem}
If $X$ is an $\rm{AE}_p(n)$-space, then $\{\omega\}$ is a $Z_n$-set in $\omega X$.
\end{lem}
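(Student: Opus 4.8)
The plan is to apply Lemma~\ref{z-sets} to the space $\omega X$ and its subset $\{\omega\}$. Since $X$ is locally compact, separable and metrizable, $\omega X$ is a metrizable compactum; and, as the Introduction notes that there is no compact $\rm{AE}_p(n)$-space, $X$ is non-compact, hence dense in $\omega X$, so $\{\omega\}$ is closed and nowhere dense. Moreover $\omega X$ is $\rm{LC}^{n-1}$ at each point of $X$ --- this is simply the $\rm{LC}^{n-1}$-property of $X$, the proper analogue of the classical characterization recalled in the Introduction, which holds for every $\rm{ANE}_p(n)$-space (cf.\ \cite{chi}), and $X\in\rm{AE}_p(n)$ implies $X\in\rm{ANE}_p(n)$. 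By Lemma~\ref{z-sets} it therefore remains to show that $\omega X$ is $\rm{LC}^{n-1}$ at $\omega$ and that $\{\omega\}$ is $\rm{LCC}^{n-1}$ in $\omega X$. Fix an exhaustion $K_0\con K_1\con\dots$ of $X$ by compacta with $K_i\con\Int K_{i+1}$ and $\bigcup_iK_i=X$, and set $W_i=\{\omega\}\cup(X\setminus K_i)$; the $W_i$ form a neighbourhood base at $\omega$, and since $W_i\setminus\{\omega\}=X\setminus K_i$, the two required properties amount to the following: for each $m\le n-1$ and each $i$ there is $j\ge i$ such that \emph{(a)} every map $\mathbb S^m\to X\setminus K_j$ extends to a map $\mathbb B^{m+1}\to X\setminus K_i$, and \emph{(b)} every map $\mathbb S^m\to W_j$ extends to a map $\mathbb B^{m+1}\to W_i$.

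For \emph{(a)} I argue by contradiction. If it fails at $i=0$, pick for each $j\ge1$ a map $\phi_j:\mathbb S^m\to X\setminus K_j$ that does not extend to any map $\mathbb B^{m+1}\to X\setminus K_0$. Let $Y=\bigsqcup_{j\ge1}\mathbb B^{m+1}_j$ be the topological sum of countably many copies of $\mathbb B^{m+1}$, a locally compact, $\sigma$-compact (hence Lindel\"of) space with $\dim Y=m+1\le n$, let $A=\bigsqcup_{j\ge1}\mathbb S^m_j$ be the closed union of the boundary spheres, and define $F:A\to X$ by $F|_{\mathbb S^m_j}=\phi_j$. If $L\con X$ is compact, then $L\con K_{j_0}$ for some $j_0$, whence $\phi_j(\mathbb S^m_j)\con X\setminus K_j\con X\setminus L$ for all $j\ge j_0$ and $F^{-1}(L)\con\bigsqcup_{j<j_0}\mathbb S^m_j$ is compact; so $F$ is proper. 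Since $X\in\rm{AE}_p(n)$, $F$ extends to a proper map $\widetilde F:Y\to X$. Then $\widetilde F^{-1}(K_0)$ is compact and therefore meets only finitely many of the clopen summands $\mathbb B^{m+1}_j$; for any $j$ avoided by it, $\widetilde F|_{\mathbb B^{m+1}_j}$ has image in $X\setminus K_0$ and extends $\phi_j$ --- a contradiction. Running this for each $m\le n-1$ and intersecting the finitely many resulting neighbourhoods of $\omega$ gives that $\{\omega\}$ is $\rm{LCC}^{n-1}$.

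For \emph{(b)} the same device works, modified to allow a map into $W_j$ to attain the value $\omega$. If \emph{(b)} fails at $i=0$, pick for each $j\ge1$ a map $\phi_j:\mathbb S^m\to W_j$ with no extension $\mathbb B^{m+1}\to W_0$; we may assume $\phi_j\not\equiv\omega$. Put $Z_j=\phi_j^{-1}(\omega)$, a closed subset of $\mathbb S^m_j=\partial\mathbb B^{m+1}_j$, and let $Y=\bigsqcup_{j\ge1}(\mathbb B^{m+1}_j\setminus Z_j)$; each summand is open in a ball, so $Y$ is again locally compact, $\sigma$-compact and of dimension $\le m+1\le n$. The restriction $\phi_j|_{\mathbb S^m_j\setminus Z_j}:\mathbb S^m_j\setminus Z_j\to X\setminus K_j$ is proper as a map into $X$, since $\phi_j^{-1}(L)$ is, for compact $L\con X$, a closed (hence compact) subset of $\mathbb S^m_j$ disjoint from $Z_j$. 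Assembling these on the closed set $A=\bigsqcup_j(\mathbb S^m_j\setminus Z_j)\con Y$ produces a proper map $F:A\to X$, which by $X\in\rm{AE}_p(n)$ extends to a proper map $\widetilde F:Y\to X$. As before, $\widetilde F^{-1}(K_0)$ meets only finitely many summands, so for all but finitely many $j$ the (proper) restriction $\widetilde F_j:=\widetilde F|_{\mathbb B^{m+1}_j\setminus Z_j}$ has image in $X\setminus K_0$; fix such a $j$. Being proper into $X$, $\widetilde F_j$ extends over the one-point compactification $\omega(\mathbb B^{m+1}_j\setminus Z_j)=\mathbb B^{m+1}_j/Z_j$ to a map $\overline{\widetilde F_j}:\mathbb B^{m+1}_j/Z_j\to\omega X$ carrying the collapsed point to $\omega$; its image lies in $(X\setminus K_0)\cup\{\omega\}=W_0$. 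Precomposing with the quotient map $\mathbb B^{m+1}_j\to\mathbb B^{m+1}_j/Z_j$ gives a continuous $G_j:\mathbb B^{m+1}_j\to W_0$ that equals $\phi_j$ on $\mathbb S^m_j\setminus Z_j$ and is constantly $\omega$ on $Z_j$, so $G_j$ restricts to $\phi_j$ on $\mathbb S^m_j$ --- contradicting the choice of $\phi_j$. Thus \emph{(b)} holds and $\omega X$ is $\rm{LC}^{n-1}$, completing the proof.

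Routine in all of this are the reduction via Lemma~\ref{z-sets}, the checks that the topological sums are locally compact Lindel\"of of dimension $\le n$ and that the assembled maps $F$ are proper, and the $\rm{LC}^{n-1}$-property of $X$ at its own points. I expect the delicate step to be \emph{(b)}: one must be sure that removing $Z_j$ from the boundary of the ball really makes $\phi_j$ into a proper map into $X$ to which $X\in\rm{AE}_p(n)$ applies, that $\omega(\mathbb B^{m+1}\setminus Z_j)$ is indeed $\mathbb B^{m+1}/Z_j$, and that the reassembled $G_j$ is continuous and still extends $\phi_j$ --- i.e. the bookkeeping forced by maps into $\omega X$ that genuinely meet $\omega$.
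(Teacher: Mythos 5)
Your part \emph{(a)} is essentially the paper's own proof: reduce via Lemma~\ref{z-sets} to showing $\{\omega\}$ is $\rm{LCC}^{n-1}$, assume not, assemble the non-extendable spheres into a proper map on the boundary of a countable disjoint union of balls, extend properly by $X\in\rm{AE}_p(n)$, and note that the preimage of a fixed compactum can meet only finitely many summands --- a contradiction. The one genuine addition is your part \emph{(b)}: Lemma~\ref{z-sets} requires the ambient space ($\omega X$) to be $\rm{LC}^{n-1}$, including at $\omega$ itself, and the paper only records that $X$ is $\rm{LC}^{n-1}$ before invoking the lemma, leaving the local connectivity at $\omega$ implicit (it does follow, since a nowhere dense $\rm{LCC}^{n-1}$ point is a local $Z_k$-set, so spheroids near $\omega$ can first be pushed off $\omega$ and then contracted using \emph{(a)}). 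Your direct verification via the deleted balls $\mathbb B^{m+1}_j\setminus Z_j$ and the identification of $\omega(\mathbb B^{m+1}_j\setminus Z_j)$ with $\mathbb B^{m+1}_j/Z_j$ is correct and handles this point cleanly; it is more work than strictly necessary but closes a gap the paper does not explicitly address. The remaining ingredient --- that $X$ itself is $\rm{LC}^{n-1}$ --- you attribute somewhat vaguely to \cite{chi}; the paper's cleaner justification is that every proper map from a compactum is automatic, so $X\in\rm{AE}_p(n)$ makes $X$ an absolute extensor for compacta of dimension $\le n$, hence $\rm{LC}^{n-1}$.
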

\begin{proof}
Since every metric $\rm{AE}_p(n)$-space is an absolute extensor for compact spaces of dimension $\leq n$, $X$ is $\rm{LC}^{n-1}$
Hence, by Lemma 2.1, we need to show that $\{\omega\}$ is $\rm{LCC}^{n-1}$. Suppose there are $k\leq n-1$ and a neighborhood $U$ of $\{\omega\}$ in $\omega X$ such that for every neighborhood $V$ of $\{\omega\}$ there exists a map $g_V:\mathbb S^k\to V\backslash\{\omega\}$ which does not admit an extension from $\mathbb B^{k+1}$ into $U\backslash\{\omega\}$. Take a local base $\{V_m\}$ of neighborhoods of $\{\omega\}$ in $\omega X$ and corresponding maps $g_m:\mathbb S^k\to V_m\backslash\{\omega\}$ such that $V_m\subset U$ and each $g_m$ cannot be extended to a map $\widetilde g_m:\mathbb B^{k+1}\to U\backslash\{\omega\}$. Now, for each $m$ let $\mathbb S^k_m$ and $\mathbb B^{k+1}_m$ be copies of
$\mathbb S^k$ and $\mathbb B^{k+1}$, respectively. Consider the disjoint unions $Y=\biguplus_{m=1}^\infty\mathbb B^{k+1}_m$,
$A=\biguplus_{m=1}^\infty\mathbb S^{k}_m$ and their one-point compactification $\omega Y=Y\cup\{\omega_Y\}$. Obviously,
$\omega A=A\cup\{\omega_Y\}$ and there is a map $g: A\to X$ with $g|\mathbb S^{k}_m=g_m$. Since the map $g$ is proper, it admits a proper extension $\widetilde g:Y\to X$. Hence, $\widetilde g$ is extended to a map $h:\omega Y\to \omega X$
such that $h(\{\omega_Y\})=\{\omega\}$. Consequently, $h^{-1}(U)$ contains almost all $\mathbb B^{k+1}_m$. On the other hand, $h(\mathbb B^{k+1}_m)=\widetilde g(\mathbb B^{k+1}_m)\subset X$.
Therefore, $\widetilde g|\mathbb B^{k+1}_m)$ is a map into $U\backslash\{\omega\}$ extending $g_m$ for every $\mathbb B^{k+1}_m$ contained in
$h^{-1}(U)$, a contradiction.
\end{proof}

\begin{pro}
Every space $X$  is an $\rm{AE}_p(0)$.
\end{pro}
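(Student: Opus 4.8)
The plan is to exploit the flexibility of zero-dimensional domains: cut $Y$ into countably many disjoint compact open pieces, extend $f$ over each piece that meets $A$ without enlarging its range, and send the remaining pieces to points escaping to infinity in $X$. First I would record the structure of $Y$. Since $Y$ is locally compact, Lindel\"of and Hausdorff, it is normal, and the hypothesis $\dim Y\le 0$ then yields a base of compact open sets; taking a countable subcover of $Y$ by such sets and disjointifying it, we obtain $Y=\biguplus_{j\ge1}L_j$ with each $L_j$ a nonempty compact open (hence zero-dimensional) subset of $Y$. Put $A_j:=A\cap L_j$, a compact set with $f(A_j)$ compact in $X$. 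Since $f$ is proper, $f^{-1}(C)$ is compact for every compact $C\subset X$ and therefore meets only finitely many $L_j$; equivalently, $f(A_j)\cap C=\varnothing$ for all but finitely many $j$.

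Now I would extend $f$ piece by piece. If $A_j\neq\varnothing$ I claim $f|_{A_j}$ extends to a map $\widetilde f_j\colon L_j\to X$ with $\widetilde f_j(L_j)\subseteq f(A_j)$. When $L_j$ is metrizable this is immediate: a closed subset of a compact zero-dimensional metric space is a retract of it (write $L_j$ as an inverse limit of finite discrete spaces and lift a retraction through the bonding maps), so one may take $\widetilde f_j=(f|_{A_j})\circ r_j$ for a retraction $r_j\colon L_j\to A_j$. In general $A_j$ need not be a retract of $L_j$, but $A_j$ is zero-dimensional, so $f|_{A_j}$ factors as $A_j\xrightarrow{p}N\xrightarrow{h}X$ with $N$ a compact metrizable zero-dimensional space and $h(N)=f(A_j)$; one extends $p$ over $L_j$ by embedding $N$ in the Cantor set $2^{\mathbb N}$, extending $p$ coordinatewise (using that $L_j$ is normal with $\dim L_j\le0$), and composing with a retraction $2^{\mathbb N}\to N$, which gives an extension $q\colon L_j\to N$ of $p$; then $\widetilde f_j:=h\circ q$ maps $L_j$ into $f(A_j)$ and extends $f|_{A_j}$. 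If $A_j=\varnothing$ there is no constraint from $A$, and here I would use that $X$ is not compact: writing $X=\bigcup_mC_m$ with $C_m$ compact and $C_m\subset\mathrm{int}\,C_{m+1}$, choose $x_m\in X\setminus C_m$, so that $\{x_m\}_m$ escapes every compact subset of $X$, and let $\widetilde f_j$ on the $i$-th piece with $A_j=\varnothing$ be the constant map with value $x_i$ (if there are only finitely many such pieces the choice is immaterial).

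Finally I would set $\widetilde f\colon Y\to X$ by $\widetilde f|_{L_j}=\widetilde f_j$; this is continuous because the $L_j$ are clopen and cover $Y$, and it extends $f$ since each $\widetilde f_j$ restricts to $f|_{A_j}$ on $A_j$. For properness, fix a compact $C\subset X$. Then $\widetilde f(L_j)$ meets $C$ only if either $A_j\neq\varnothing$ and $f(A_j)\cap C\neq\varnothing$ — finitely many $j$, by the first paragraph — or $A_j=\varnothing$ and the point $x_i$ assigned to $L_j$ lies in $C$ — again finitely many $j$, by the choice of the $x_m$. For each such $j$ the set $\widetilde f^{-1}(C)\cap L_j$ is closed in the compact space $L_j$, hence compact, so $\widetilde f^{-1}(C)$ is a finite union of compacta and is itself compact. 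Thus $\widetilde f$ is a proper extension of $f$. The one genuinely delicate point is this last one: the extension over the pieces disjoint from $A$ needs a sequence escaping to infinity in $X$, so the argument — and the statement — really concerns non-compact spaces, in agreement with the remark in the introduction that no compact space is an $\mathrm{AE}_p(n)$.
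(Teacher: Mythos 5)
Your proof is correct, but it takes a genuinely different route from the paper's. The paper works at the level of compactifications: it extends $f$ to a map of $\overline A\cup(\beta Y\setminus Y)$ into $\omega X$ sending the corona to the point at infinity, extends further over $\beta Y$ using $\omega X\in\mathrm{AE}(0)$, and then runs a Baire category argument in the function space $C(\beta Y,\omega X)$ to perturb the extension off the nowhere dense set $\{\omega\}$ over $Y\setminus A_1$; the payoff of that scheme is that it is reused verbatim for the implication $(iii)\Rightarrow(i)$ of Theorem 2.4, where $n\geq 1$. You instead exploit dimension zero directly: the partition $Y=\biguplus_j L_j$ into compact open pieces reduces the problem to extending over a single zero-dimensional compactum, which you handle by factoring $f|_{A_j}$ through a metrizable zero-dimensional compactum, extending coordinatewise into the Cantor set and retracting --- the possible non-metrizability of $Y$ is the one delicate point here, and you treat it correctly --- while the pieces missing $A$ are sent along a sequence escaping to infinity in $X$. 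Your argument is more elementary and self-contained (no $\beta Y$, no function-space Baire category, no appeal to $\omega X\in\mathrm{AE}(0)$) and makes completely transparent where non-compactness of $X$ is used; its limitation is that it does not generalize to $n\geq 1$, where no clopen decomposition of $Y$ is available. Your closing caveat is also accurate: as literally stated the proposition fails for compact $X$ (take $A=\varnothing$ and $Y$ non-compact), and the paper's own proof tacitly assumes non-compactness too, since $\{\omega\}$ is nowhere dense in $\omega X$ only when $X$ is non-compact.
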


\begin{proof}
Let $A\subset Y$ be a closed set and $f:A\to X$ be a proper map, where $Y$ is a $0$-dimensional locally compact and Lindel\"{o}f space.
Then $f$ can be extended to a map $f_1:\overline A\to\omega X$ over the closure of $A$ in $\beta Y$ with $f_1(\overline A\backslash A)=\{\omega\}$.
Next, consider the map $g:A\cup (\beta Y\backslash Y)\to \omega X$ such that $g(y)=f_1(y)$ for $y\in\overline A$ and $g(y)=\{\omega\}$ for
$y\in\beta Y\backslash Y$. Since $\omega X\in\rm{AE}(0)$ (as a complete metric space), $g$ admits an extension $\widetilde g:\beta Y\to\omega X$.
The set $\widetilde g(A)=f(A)$ is closed in $X$, so $A_1=\widetilde g^{-1}(f(A))$ is closed and $G_\delta$ in $Y$ such that
$\widetilde g|A_1$ is proper. Consider the function space $C(\beta Y,\omega X)$ with the uniform convergence topology and let $B=A_1\cup (\beta Y\backslash Y)$. The set $C_B=\{h\in C(\beta Y,\omega X): h|B=\widetilde g|B\}$ is a complete metric space.
Choose a sequence
$\{K_i\}$ of compact sets $K_i\subset Y$ with
$\bigcup_{i\geq 1}K_i=Y\backslash A_1$ (this is possible because $Y$ is a locally compact Lindel\"{o}f-space and $A_1$ is a closed $G_\delta$-set in $Y$).
Let $C_i$ be the set of all maps $h\in C_B$ such that $h(K_i)\subset X$. Since $\{\omega\}$ is nowhere dense set in $\omega X$ (it is actually a $Z_0$-set in $\omega X$) and $\omega X\in\rm{AE}(0)$, we can show that each $C_i$ is an open and dense subset of $C_B$.  Hence, $\bigcap_{i\geq 1}C_i\neq\varnothing$. Then $h(Y)\subset X$ and
$h(\beta Y\backslash Y)=\{\omega\}$ for every $h\in\bigcap_{i\geq 1}C_i$. Hence, $h|Y$ is a proper map into $X$ extending $f$.
\end{proof}

\begin{thm}
The following conditions are equivalent for any space $X$ and $n\geq 1$:
\begin{itemize}
\item[(1)] $X\in\rm{AE}_p(n)$;
\item[(ii)] The one-point compactification $\omega X$ is an $\rm{AE}(n)$ and $\{\omega\}$ is a $Z_n$-set in $\omega X$;
\item[(iii)] There exists a metrizable compactification $\widetilde X$ of $X$ such that both $\widetilde X$ and the remainder $\widetilde X\backslash X$ are $\rm{AE}(n)$spaces, and
$\widetilde X\backslash X$ is an $Z_n$-set in $\widetilde X$.
\end{itemize}
\end{thm}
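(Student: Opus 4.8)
The plan is to prove $(1)\Rightarrow(\mathrm{ii})\Rightarrow(1)$ together with $(\mathrm{ii})\Leftrightarrow(\mathrm{iii})$, the core being the equivalence $(1)\Leftrightarrow(\mathrm{ii})$, which exchanges ``proper extensions into $X$'' for ``ordinary extensions into $\omega X$ that avoid $\omega$''. For $(1)\Rightarrow(\mathrm{ii})$: the assertion that $\{\omega\}$ is a $Z_n$-set in $\omega X$ is Lemma 2.2, so it remains to get $\omega X\in\mathrm{AE}(n)$. Since $\omega X$ is compact metric, it is enough to show that it is an absolute extensor for closed subsets of at most $n$-dimensional compacta (it is then $\mathrm{LC}^{n-1}$ and $\mathrm{C}^{n-1}$, hence an $\mathrm{AE}(n)$ by \cite{bo}). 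So take a closed $A\subseteq Y$, with $Y$ compact metric, $\dim Y\le n$, and $f\colon A\to\omega X$; put $A_\infty=f^{-1}(\omega)$. Because $A$ is compact, $f|(A\setminus A_\infty)\colon A\setminus A_\infty\to X$ is proper (the preimage of a compactum of $X$ is closed in the compact set $A$), and its domain is closed in the locally compact Lindel\"of space $Y\setminus A_\infty$ of dimension $\le n$; by $(1)$ it extends to a proper $g\colon Y\setminus A_\infty\to X$, and the map which is $g$ on $Y\setminus A_\infty$ and $\omega$ on $A_\infty$ is a continuous extension of $f$ — continuity along $A_\infty$ being forced by properness of $g$.

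For $(\mathrm{ii})\Rightarrow(1)$ I would repeat, in the proper setting, the Baire-category scheme of Proposition 2.3. Let $f\colon A\to X$ be proper with $A$ closed in a locally compact Lindel\"of (hence $\sigma$-compact) metric space $Y$, $\dim Y\le n$, and fix compacta $\overline{U_i}\subseteq U_{i+1}$ with $\bigcup_i\overline{U_i}=Y$. In $\omega Y$ form $\omega A=A\cup\{\omega_Y\}$ and the map $\omega f\colon\omega A\to\omega X$ sending $\omega_Y$ to $\omega$; properness of $f$ makes $\omega f$ continuous and gives $(\omega f)^{-1}(\omega)=\{\omega_Y\}$. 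By the countable sum theorem $\dim\omega Y\le n$ (it is the union of the $\overline{U_i}$ and a point), so $\omega X\in\mathrm{AE}(n)$ provides an extension $h\colon\omega Y\to\omega X$ of $\omega f$. In the complete metric space $C_B=\{u\in C(\omega Y,\omega X):u|\omega A=\omega f\}$ set $C_i=\{u\in C_B:\omega\notin u(\overline{U_i})\}$; each $C_i$ is open, and each is dense because $\{\omega\}$ is a $Z_n$-set in the $\mathrm{LC}^{n-1}$-space $\omega X$: given $u$ and $\varepsilon>0$, push $u|\overline{U_{i+1}}$ off $\omega$ keeping it fixed on $A\cap\overline{U_{i+1}}$ (where it already misses $\omega$) and $\varepsilon$-close to $u$, then reattach this to $u$ across the collar $\overline{U_{i+1}}\setminus U_i$ using local connectivity of $\omega X$ and $\dim\le n$. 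Any $h'\in\bigcap_i C_i$ satisfies $h'|\omega A=\omega f$ and $h'(Y)\subseteq X$, so $\widetilde f=h'|Y\colon Y\to X$ is proper and extends $f$.

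The implication $(\mathrm{ii})\Rightarrow(\mathrm{iii})$ is immediate — take $\widetilde X=\omega X$, so $\widetilde X\setminus X=\{\omega\}$ is a point (an $\mathrm{AE}(n)$) and a $Z_n$-set by $(\mathrm{ii})$. For $(\mathrm{iii})\Rightarrow(\mathrm{ii})$, collapse $R:=\widetilde X\setminus X$ to a point, obtaining a quotient map $q\colon\widetilde X\to\omega X$ which is a homeomorphism off $R$. Since $R$ is an $\mathrm{AE}(n)$ that is a $Z_n$-set in the $\mathrm{AE}(n)$-space $\widetilde X$, it has arbitrarily small neighborhoods $N$ in $\widetilde X$ which behave like $R$ through dimension $n$ — in particular are $\mathrm{C}^{n-1}$, indeed $\mathrm{AE}(n)$. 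Granting this, one transfers everything through $q$: $\{\omega\}$ is a $Z_n$-set in $\omega X$ (to approximate a map $\varphi\colon\mathbb B^n\to\omega X$ by maps into $X$, re-extend $\varphi$ over the part of the domain lying in a small $\mathrm{AE}(n)$-neighborhood $N$ of $R$ into $N$, push the resulting map $\mathbb B^n\to\widetilde X$ off $R$ — a standard $Z_n$-set displacement, $R$ being disjoint from the part of the domain one must keep fixed — and compose with $q$), and $\omega X$ is $\mathrm{LC}^{n-1}$ (trivially at points of $X$; at $\omega$ because the small $\mathrm{AE}(n)$-neighborhoods of $R$ give $(n-1)$-nullhomotopies in larger ones) and $\mathrm{C}^{n-1}$ (push a sphere off $\omega$, fill it inside $\widetilde X$, compose with $q$), so $\omega X\in\mathrm{AE}(n)$ by \cite{bo}.

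I expect the genuine difficulties to be concentrated in two places. One is the density of the sets $C_i$ in $(\mathrm{ii})\Rightarrow(1)$: the controlled displacement of a map off the $Z_n$-set $\{\omega\}$ relative to a closed set on which it already avoids $\omega$, and the subsequent collar gluing — this is precisely where the $Z_n$-set hypothesis is consumed, and it is the proper analogue of the density step left to the reader in Proposition 2.3. The other, heavier one is the fact used in $(\mathrm{iii})\Rightarrow(\mathrm{ii})$ that collapsing $R$ does not destroy the $\mathrm{AE}(n)$- and $Z_n$-set properties, i.e.\ that an $\mathrm{AE}(n)$ which is a $Z_n$-set in an $\mathrm{AE}(n)$-space has arbitrarily small $\mathrm{AE}(n)$-neighborhoods; this Menger-manifold-type input is the technically heaviest ingredient of the argument.
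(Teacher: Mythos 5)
Your proposal is sound in outline, and your treatment of $(1)\Leftrightarrow(\mathrm{ii})$ is essentially correct but genuinely different from the paper's. For $(1)\Rightarrow(\mathrm{ii})$ the paper does not reduce to compact pairs at all: it embeds $\omega X$ in $Q$, takes Dranishnikov's $n$-invertible resolution $d_n\colon\mu^n\to Q$, uses $X\in\mathrm{AE}_p(n)$ to extend $d_n|d_n^{-1}(X)$ to a proper map on $\mu^n\setminus d_n^{-1}(\omega)$, and deduces $\omega X\in\mathrm{AE}(n)$ from the $n$-invertibility of the resulting map $\mu^n\to\omega X$. Your argument --- split $f\colon A\to\omega X$ along $A_\infty=f^{-1}(\omega)$, note that $f|(A\setminus A_\infty)$ is proper into $X$, extend properly over $Y\setminus A_\infty$, and glue $\omega$ back on $A_\infty$ (continuity there indeed follows from properness) --- is more elementary and avoids the Menger machinery entirely; it does rely on the Kuratowski--Dugundji reduction ($\mathrm{AE}$ for $n$-dimensional compact pairs $\Rightarrow\mathrm{LC}^{n-1}\wedge\mathrm{C}^{n-1}\Rightarrow\mathrm{AE}(n)$), but the paper itself invokes exactly this in Lemma 2.2. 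Your $(\mathrm{ii})\Rightarrow(1)$ is the paper's Baire-category scheme with $\omega Y$ in place of $\beta Y$; the density of the $C_i$ is left at the same level of informality as in the paper, so I will not press you on it. Note only that the definition of $\mathrm{AE}_p(n)$ allows $Y$ to be an arbitrary locally compact Lindel\"of space, not a metric one; your $\omega Y$ device still works there (with the countable sum theorem giving $\dim\omega Y\le n$), but you then need the fact that a compact metric $\mathrm{AE}(n)$ extends maps from non-metrizable $n$-dimensional compacta --- the same silent step the paper takes when it extends over $\beta Y$.

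The genuine weakness is your handling of $(\mathrm{iii})$. You close the equivalence via $(\mathrm{iii})\Rightarrow(\mathrm{ii})$, which forces you to collapse $R=\widetilde X\setminus X$ and to invoke the unproved claim that an $\mathrm{AE}(n)$ which is a $Z_n$-set in an $\mathrm{AE}(n)$-compactum has arbitrarily small $\mathrm{AE}(n)$-neighborhoods; you correctly identify this as the heaviest ingredient, but as it stands it is an assertion of Menger-manifold calibre with no proof or reference, i.e.\ a real gap. It is also an avoidable one: the paper instead proves $(\mathrm{iii})\Rightarrow(1)$ directly, by exactly the Baire-category argument you have already set up for $(\mathrm{ii})\Rightarrow(1)$ --- first extend $f$ over $\overline A$ (closure in $\beta Y$) so that $\overline A\setminus A$ lands in $\widetilde X\setminus X$ (properness of $f$ forces this), use $\widetilde X\setminus X\in\mathrm{AE}(n)$ to send $\beta Y\setminus Y$ into the remainder, use $\widetilde X\in\mathrm{AE}(n)$ to extend over $\beta Y$, and then run the open-dense sets $C_i$ against the $Z_n$-set $\widetilde X\setminus X$. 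You should replace your $(\mathrm{iii})\Rightarrow(\mathrm{ii})$ by this $(\mathrm{iii})\Rightarrow(1)$ (your $(\mathrm{ii})\Rightarrow(\mathrm{iii})$ being trivial, the cycle then closes); no new ideas are needed beyond those already in your second paragraph.
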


\begin{proof}
Suppose $X\in\rm{AE}_p(n)$  and embed
$\omega X$ in the Hilbert cube $Q$. According to Dranishnikov \cite{dr} there exists a surjective, open $n$-invertible map $d_n:\mu^n\to Q$
such that $d_n^{-1}(z)$ is homeomorphic to $\mu^n$ for every $z\in Q$,
where $\mu^n$ is the universal $n$-dimensional Menger compactum. Recall that the $n$-invertibility of $d_n$ means that for any paracompact space $Z$ of dimension $\dim Z\leq n$ and a map $g:Z\to Q$ there is a map $\widetilde g:Z\to\mu^n$ such that $d_n\circ\widetilde g=g$.
Then $d_n^{-1}(\{\omega\})$ is nowhere dense in $\mu^n$ and
$\mu^n$ is a compactification of $\mu^n\backslash d_n^{-1}(\{\omega\})$.
Now, consider the restriction $d_n'=d_n|d_n^{-1}(X)$. Obviously, $d_n'$ is a proper map, so it admits a proper extension $h_n:\mu^n\backslash d_n^{-1}(\{\omega\})\to X$. The properness of $h_n$ implies that $h_n$ can be extended to a continuous map $\widetilde h_n:\mu^n\to\omega X$ such that $\widetilde h_n(d_n^{-1}(\{\omega\}))=\{\omega\}$. Then
$\widetilde h_n|(d_n^{-1}(\omega X))=d_n|(d_n^{-1}(\omega X))$. Hence, $\widetilde h_n$ is an $n$-invertible map because so is $d_n$. This fact in combination with $\mu^n\in\rm{AE}(n)$ yields that $\omega X\in\rm{AE}(n)$. Finally, by Lemma 2.2, $\{\omega\}$ is a $Z_n$-set in $\omega X$. That completes the implication $(i)\Rightarrow (ii)$.
The implication $(ii)\Rightarrow (iii)$ is trivial.

To prove the implication $(iii)\Rightarrow (i)$ we follow the proof of Proposition 2.3. Let $A\subset Y$ be a closed set and $f:A\to X$ be a proper map, where $Y$ is at most $n$-dimensional locally compact and Lindel\"{o}f space. Following the notations from the proof of Proposition 2.3, we first extend $f$ to a map $f_1:\overline A\to\widetilde X$ with $f_1(\overline A\backslash A)\subset\widetilde X\backslash X$. Then, using that $\widetilde X\backslash X\in\rm{AE}(n)$, we extend $f_1$ to a map $g:\overline A\cup(\beta Y\backslash Y)\to\widetilde X$ such that
$g(\beta Y\backslash Y)\subset\widetilde X\backslash X$. Next, since
$\widetilde X$ is an $\rm{AE}(n)$, we find a map $\widetilde g:\beta Y\to\widetilde X$ extending $g$. Then $\widetilde g|A_1:A_1\to X$ is a proper extension of $f$ with $A_1\subset Y$ being a closed $G_\delta$-subset of $Y$ containing $A$. Let
$B=A_1\cup (\beta Y\backslash Y)$  and consider a sequence $\{K_i\}$ of compact sets in $Y$ with $\bigcup_{i\geq 1}K_i=Y\backslash A_1$ and the corresponding sets $C_B=\{h\in C(\beta Y,\widetilde X): h|B=\widetilde g|B\}$ and $C_i$. Now, since $\widetilde X\backslash X$ is $Z_n$-set in $\widetilde X$, all $C_i$ are open and dense in $C_B$.
Therefore, $\bigcap_{i\geq 1}C_i\neq\varnothing$ and $h|Y$ is a proper map into $X$ extending $f$ for every $h\in\bigcap_{i\geq 1}C_i$.
\end{proof}

We say that a space $X$ satisfies the {\em disjoint $n$-disks property} (br., $\rm{DD^nP}$-property) if any two maps $f,g:\mathbb B^n\to X$ can be approximated by maps $f',g':\mathbb B^n\to X$ with $f'(\mathbb B^n)\cap g'(\mathbb B^n)=\varnothing$. Bestvina \cite{b} characterized $\mu^n$ as the only $n$-dimensional metric $\rm{AE}(n)$-compactum satisfying the $\rm{DD^nP}$-property. Since $\omega X$ satisfies the $\rm{DD^nP}$ provided $X\in \rm{DD^nP}$ and $\{\omega\}$ is a $Z_n$-set in $\omega X$, Bestvina's result implies the following one:
\begin{cor}
Let $X\in\rm{AE}_p(n)$ with $\dim X=n$. Then $X\in\rm{DD^nP}$ iff $\omega X$ is homeomorphic to $\mu^n$.
\end{cor}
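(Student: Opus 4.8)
The plan is to derive the corollary by combining Theorem 2.4, Lemma 2.2 and Bestvina's characterization of $\mu^n$; the only additional ingredients are the easy facts that the $\rm{DD^nP}$-property passes to open subspaces and is transferred between $X$ and $\omega X$ across the $Z_n$-point $\{\omega\}$.

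For the implication ``$X\in\rm{DD^nP}\Rightarrow\omega X\cong\mu^n$'', I would argue as follows. By Theorem 2.4 the metrizable compactum $\omega X$ is an $\rm{AE}(n)$ and $\{\omega\}$ is a $Z_n$-set in it (the latter also being Lemma 2.2). Moreover $\dim\omega X=n$: monotonicity of $\dim$ on subspaces of separable metric spaces gives $\dim\omega X\geq\dim X=n$, while writing the locally compact $\sigma$-compact space $X$ as a countable union $\bigcup_iC_i$ of compacta $C_i$, which are then closed in $\omega X$, and applying the countable closed sum theorem to $\omega X=\{\omega\}\cup\bigcup_iC_i$ gives $\dim\omega X\leq n$. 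Next I would verify $\omega X\in\rm{DD^nP}$: given $f,g\colon\mathbb B^n\to\omega X$, I would first use that $\{\omega\}$ is a $Z_n$-set to approximate $f,g$ by maps $f_1,g_1$ into $\omega X\setminus\{\omega\}=X$, and then use $X\in\rm{DD^nP}$ to approximate $f_1,g_1$ by maps $f_2,g_2\colon\mathbb B^n\to X\subset\omega X$ with disjoint images; controlling the two successive tolerances, most cleanly by phrasing the approximations via open covers rather than a metric, makes $f_2,g_2$ the desired approximations of $f,g$. Thus $\omega X$ is an $n$-dimensional metrizable $\rm{AE}(n)$-compactum with the $\rm{DD^nP}$, and Bestvina's uniqueness theorem forces $\omega X\cong\mu^n$.

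For the converse, suppose $\omega X\cong\mu^n$. Then $\omega X\in\rm{DD^nP}$, and since $X$ is locally compact it is open in $\omega X$, so it suffices to observe that the $\rm{DD^nP}$ is inherited by open subspaces. Indeed, given $f,g\colon\mathbb B^n\to X$ and a prescribed tolerance (an open cover of $X$), the compact set $K=f(\mathbb B^n)\cup g(\mathbb B^n)$ has positive distance to $\{\omega\}$, so a routine Lebesgue-number argument yields an $\varepsilon>0$ such that every $\varepsilon$-approximation of $f,g$ in $\omega X$ automatically maps $\mathbb B^n$ into $X$ and respects the given tolerance; applying the $\rm{DD^nP}$ of $\omega X$ to $f,g$ viewed as maps into $\omega X$, with this $\varepsilon$, produces the required approximations of $f,g$ with disjoint images. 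Hence $X\in\rm{DD^nP}$.

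I do not expect a serious obstacle here: the substantive inputs, Theorem 2.4 and Bestvina's theorem, are already available, and what remains is bookkeeping. The point requiring the most care is keeping the images inside $X$ while chaining the two approximations in the forward implication; the computation $\dim\omega X=n$ is the one place where a separate (elementary) argument, via the countable closed sum theorem, is needed.
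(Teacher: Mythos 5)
Your proposal is correct and follows essentially the same route as the paper: the paper derives the corollary directly from Theorem 2.4, the $Z_n$-ness of $\{\omega\}$, and Bestvina's characterization, observing that $\omega X$ inherits the $\rm{DD^nP}$ from $X$ across the $Z_n$-point and that complements of $Z_n$-sets in $\mu^n$ satisfy the $\rm{DD^nP}$. You merely spell out the details (the dimension count and the chaining of approximations) that the paper leaves implicit.
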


Chigogidze \cite{chi1} introduced the $n$-shape functor ($n-\rm{Sh}$) and proved that two $Z_n$-sets $X$ and $Y$ in $\mu^n$, $n\geq 1$, have the same
$(n-1)$-shape if and only $\mu^{n}\backslash X$ is homeomorphic to $\mu^{n}\backslash Y$.
Surprisingly, Theorem 2.4 implies a particular case of Chigogidze's complement theorem.

\begin{cor}
Suppose $X$ and $Y$ are two $Z_n$-sets in $\mu^n$ such that $X, Y\in\rm{AE}(n)$. Then $\mu^{n}\backslash X$ is homeomorphic to $\mu^{n}\backslash Y$.
\end{cor}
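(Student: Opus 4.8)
The plan is to identify both $\mu^n\setminus X$ and $\mu^n\setminus Y$ with $\mu^n$ minus a point, using Theorem 2.4, Corollary 2.5 and the homogeneity of $\mu^n$. Put $Z=\mu^n\setminus X$. A $Z_n$-set is closed and nowhere dense (approximating constant maps shows it has empty interior), so $Z$ is a nonempty, locally compact, separable metric space, and $\mu^n$ is a metrizable compactification of $Z$ with remainder $X$. By hypothesis $X\in\rm{AE}(n)$ and $X$ is a $Z_n$-set in $\mu^n$, and of course $\mu^n\in\rm{AE}(n)$; hence condition (iii) of Theorem 2.4 holds for $Z$, so $Z\in\rm{AE}_p(n)$. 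In exactly the same way $\mu^n\setminus Y\in\rm{AE}_p(n)$.

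Next I would check that Corollary 2.5 applies to $Z$. First, $\dim Z=n$, since a nonempty open subset of $\mu^n$ is $n$-dimensional ($\mu^n$ being $n$-dimensional at every point). Second, $Z$ inherits the $\rm{DD^nP}$ from $\mu^n$: given maps $f,g\colon\mathbb B^n\to Z$ and $\varepsilon>0$, apply the $\rm{DD^nP}$ of $\mu^n$ to $f,g$ with accuracy less than $\varepsilon$ and less than the distance from the compact set $f(\mathbb B^n)\cup g(\mathbb B^n)$ to $X$; the resulting disjoint maps still take values in $Z$. Corollary 2.5 now yields $\omega Z\cong\mu^n$. Finally, since $X$ is closed in $\mu^n$, the quotient $\mu^n/X$ obtained by collapsing $X$ to a point is a compact metrizable space in which $Z$ sits as a dense open set with one-point complement; by uniqueness of the one-point compactification $\mu^n/X=\omega Z\cong\mu^n$, and likewise $\mu^n/Y\cong\mu^n$.

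It remains to match the two collapsed points. Fix homeomorphisms $\Phi\colon\mu^n/X\to\mu^n$ and $\Psi\colon\mu^n/Y\to\mu^n$ and let $a=\Phi([X])$, $b=\Psi([Y])$. Since $\mu^n$ is homogeneous (Bestvina \cite{b}), there is a homeomorphism $h\colon\mu^n\to\mu^n$ with $h(a)=b$; then $\Psi^{-1}\circ h\circ\Phi\colon\mu^n/X\to\mu^n/Y$ is a homeomorphism carrying $[X]$ to $[Y]$, and its restriction to the complements of these points is the required homeomorphism $\mu^n\setminus X\cong\mu^n\setminus Y$. The step I expect to require the most care is the identification $\omega Z=\mu^n/X$ together with the verification that $\mu^n/X$ is an $n$-dimensional metrizable $\rm{AE}(n)$-compactum with the $\rm{DD^nP}$, so that Corollary 2.5 — and through it Bestvina's characterization — genuinely applies. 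The appeal to homogeneity at the end could instead be replaced by Bestvina's $Z_n$-set unknotting theorem applied to the singletons $\{a\},\{b\}$, which are $Z_n$-sets in $\mu^n$ since $\mu^n\setminus X,\mu^n\setminus Y\in\rm{AE}_p(n)$ and Theorem 2.4 turns their points at infinity into $Z_n$-sets.
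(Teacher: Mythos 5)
Your proof is correct and follows essentially the same route as the paper: apply Theorem 2.4(iii) to see that both complements are $\rm{AE}_p(n)$, check the $\rm{DD^nP}$ and $n$-dimensionality so Corollary 2.5 identifies their one-point compactifications with $\mu^n$, and finish with the homogeneity of $\mu^n$ to match the two points at infinity. You merely supply details the paper leaves implicit (nowhere density of $Z_n$-sets, the $\omega Z=\mu^n/X$ identification, and the explicit transfer of the compactifying points), all of which check out.
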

\begin{proof}
Indeed, by Theorem 2.4 both $X'=\mu^{n}\backslash X$ and $Y'=\mu^{n}\backslash Y$ are $\rm{AE}_p(n)$. Moreover, $X'$ and $Y'$ satisfy the $\rm{DD^nP}$ because $X$ and $Y$ are $Z_n$-sets in $\mu^n$. Hence, by Corollary 2.5, both $\omega X'$ and $\omega Y'$ are homeomorphic to $\mu^n$. Finally, since $\mu^n$ is homogeneous \cite{b}, $X'$ is homeomorphic to $Y'$.
\end{proof}

\begin{pro}
A space $X$ is an $\rm{AE}_p(n)$ if and only if $X$ is a proper $n$-invertible image of $\mu^n\backslash F$ for some $Z_n$-set $F\subset\mu^n$ with $F\in\rm{AE}(n)$.
\end{pro}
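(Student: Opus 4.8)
The plan is to derive both implications from Theorem 2.4, recycling the construction used in its proof.

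\emph{Sufficiency.} Suppose $F\subset\mu^n$ is a $Z_n$-set with $F\in\rm{AE}(n)$ and that $p\colon W\to X$ is a proper $n$-invertible surjection, where $W=\mu^n\setminus F$. First, the implication $(iii)\Rightarrow(i)$ of Theorem 2.4 applied with $\widetilde X=\mu^n$ (so that $\widetilde X\setminus X=F$) gives $W\in\rm{AE}_p(n)$. Now let $f\colon A\to X$ be a proper map with $A$ closed in a locally compact Lindel\"of space $Y$, $\dim Y\le n$. Then $A$ is $\sigma$-compact, hence paracompact, and $\dim A\le n$, so the $n$-invertibility of $p$ yields a map $\widetilde f\colon A\to W$ with $p\circ\widetilde f=f$. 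For compact $K\subset W$ we have $\widetilde f^{-1}(K)\subset f^{-1}(p(K))$, which is compact because $f$ is proper; being also closed in $A$, the set $\widetilde f^{-1}(K)$ is compact, so $\widetilde f$ is proper. Since $W\in\rm{AE}_p(n)$, $\widetilde f$ extends to a proper map $\widetilde F\colon Y\to W$, and then $p\circ\widetilde F\colon Y\to X$ is a proper extension of $f$. Hence $X\in\rm{AE}_p(n)$. (Extending only over a closed neighbourhood of $A$, the same argument handles the $\rm{ANE}_p(n)$ version.)

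\emph{Necessity.} Let $X\in\rm{AE}_p(n)$; by Theorem 2.4, $\omega X\in\rm{AE}(n)$ and $\{\omega\}$ is a $Z_n$-set in $\omega X$. I would repeat the construction from the proof of Theorem 2.4: embed $\omega X$ in $Q$, take Dranishnikov's open $n$-invertible map $d_n\colon\mu^n\to Q$ with all fibres homeomorphic to $\mu^n$, and extend the proper map $d_n|d_n^{-1}(X)$ to the $n$-invertible map $\widetilde h_n\colon\mu^n\to\omega X$ with $\widetilde h_n(d_n^{-1}(\omega))=\{\omega\}$. Off $d_n^{-1}(\omega)$ the map $\widetilde h_n$ takes values in $X$, so $F:=\widetilde h_n^{-1}(\omega)=d_n^{-1}(\omega)$; thus $F$ is a fibre of $d_n$, whence $F\cong\mu^n$ and $F\in\rm{AE}(n)$, and (granting the claim below) $F$ is a $Z_n$-set in $\mu^n$. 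Then $\widetilde h_n|(\mu^n\setminus F)\colon\mu^n\setminus F\to X$ is surjective (as $\widetilde h_n$ is surjective with $\widetilde h_n^{-1}(\omega)=F$), proper (it is the restriction of a perfect map to $\widetilde h_n^{-1}(X)=\mu^n\setminus F$, and perfect maps between locally compact spaces are proper), and $n$-invertible (any $\widetilde h_n$-lift of a map $Z\to X$ automatically lands in $\mu^n\setminus F$). So $X$ is a proper $n$-invertible image of $\mu^n\setminus F$, as required.

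\emph{The main obstacle.} What remains --- and the only place where something beyond the bare $n$-invertibility of $d_n$ is used --- is the claim that $F=d_n^{-1}(\omega)$ is a $Z_n$-set in $\mu^n$. Since $d_n$ is open and $\{\omega\}$ is nowhere dense in $Q$, $F$ is closed and nowhere dense in $\mu^n$, so by Lemma 2.1 (recall that $\mu^n$ is $\rm{LC}^{n-1}$) it suffices to show that $F$ is $\rm{LCC}^{n-1}$ in $\mu^n$. Here I would use the fine structure of $d_n$: as $\{\omega\}$ is a $Z$-set in $Q$ (a point being a finite-dimensional compactum), the $d_n$-image of a map $\mathbb S^k\to B_\delta(x)\setminus F$ with $k\le n-1$, $x\in F$, misses $\{\omega\}$ and can be contracted in $Q$ off the $Z$-set $\{\omega\}$; this contraction lifts back through $d_n$ over the $(k+1)$-cell --- inside $\mu^n\setminus F$ and inside a prescribed ball --- by a controlled lifting property coming from the openness and $n$-softness of $d_n$ (the $n$-softness being part of Dranishnikov's theorem \cite{dr}). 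Equivalently, one may invoke the known fact that $d_n^{-1}$ carries $Z$-sets of $Q$ to $Z_n$-sets of $\mu^n$. Settling this point is the crux of the argument.
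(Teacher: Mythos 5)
Your proposal is correct and follows essentially the same route as the paper: sufficiency by lifting the proper map through the $n$-invertible surjection and applying Theorem 2.4 to $\mu^n\setminus F$, and necessity by rerunning the Dranishnikov-resolution construction from the proof of Theorem 2.4 with $F=d_n^{-1}(\{\omega\})\cong\mu^n$. The ``main obstacle'' you isolate --- that $d_n^{-1}$ of a $Z$-set in $Q$ is a $Z_n$-set in $\mu^n$ --- is exactly how the paper closes the argument, by citing the Ageev--Cencelj--Repov\v{s} result \cite{acr} rather than proving it directly, so your second (``known fact'') option is the intended one.
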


\begin{proof}
Let $X\in\rm{AE}_p(n)$ and embed $\omega X$ in $Q$. As in Theorem 2.4, considering Dranishnikov's resolution $d_n:\mu^n\to Q$ we obtain a proper map $h_n:\mu^n\backslash d_n^{-1}(\{\omega\})\to X$ which extends the map $d_n|d_n^{-1}(X)$. Since $d_n$ is invertible, so is $h_n$. On the other hand, by \cite{acr}, we can assume that $d_n^{-1}(K)$ is a $Z_n$-set in $\mu^n$ for every $Z$-set $K\subset Q$.
Hence, $d_n^{-1}(\{\omega\})\subset\mu^n$ is a $Z_n$-set (recall that a $Z$-set is a set which is $Z_n$-set for all $n$ and that every $z\in Q$ is a $Z$-set in $Q$). On the other hand, $d_n^{-1}(\{\omega\})$ is homeomorphic to $\mu^n$, so $d_n^{-1}(\{\omega\})\in\rm{AE}(n)$.

Now, suppose there is a proper $n$-invertible map $g: \mu^n\backslash F\to X$ for some $Z_n$-set $F\subset\mu^n$. Since $g$ is $n$-invertible, every proper map $f:A\to X$, where $A$ is closed subset of at most $n$-dimensional locally compact and Lindel\"{o}f space $Y$, can be lifted to a proper map $f':A\to \mu^n\backslash F$. By Theorem 2.4, $\mu^n\backslash F\in\rm{AE}_p(n)$. So, $f'$ admits a proper extension
$h:Y\to\mu^n\backslash F$. Finally, $g\circ h:Y\to X$ is a proper extension of $f$. Therefore, $X\in\rm{AE}_p(n)$.
\end{proof}

\begin{cor}
A space $X$ is an $\rm{AE}_p(n)$ if and only if $X$ is a proper $n$-invertible image of $\mu^n\backslash\{pt\}$.
\end{cor}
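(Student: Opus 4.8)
The plan is to read the statement off from Proposition 2.7 and Corollary 2.6, the point being that $\mu^n\setminus F$ is homeomorphic to $\mu^n\setminus\{pt\}$ whenever $F$ is a $Z_n$-set contained in $\rm{AE}(n)$.

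For the ``if'' part I would first record that a singleton $\{pt\}\subset\mu^n$ is a $Z_n$-set. It is closed and nowhere dense, since $\mu^n$ has no isolated points; and using the general-position properties of $\mu^n$ afforded by the $\rm{DD^nP}$-property (a map of a cube $\mathbb B^{m}$, $m\leq n$, can be approximated rel a closed set by maps missing a prescribed point) one checks that $\{pt\}$ is $\rm{LCC}^{n-1}$ in the $\rm{LC}^{n-1}$-space $\mu^n$, so Lemma \ref{z-sets} applies; alternatively, this is a standard fact of Menger-manifold theory. Since a one-point space is an $\rm{AE}(n)$, the set $F=\{pt\}$ meets the hypotheses of Proposition 2.7, and therefore every proper $n$-invertible image of $\mu^n\setminus\{pt\}$ is an $\rm{AE}_p(n)$.

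For the ``only if'' part, let $X\in\rm{AE}_p(n)$. By Proposition 2.7 there exist a $Z_n$-set $F\subset\mu^n$ with $F\in\rm{AE}(n)$ and a proper $n$-invertible surjection $g\colon\mu^n\setminus F\to X$. Applying Corollary 2.6 to the $Z_n$-sets $F$ and $\{pt\}$ of $\mu^n$, both of which lie in $\rm{AE}(n)$, we obtain a homeomorphism $\psi\colon\mu^n\setminus\{pt\}\to\mu^n\setminus F$. Then $g\circ\psi\colon\mu^n\setminus\{pt\}\to X$ is proper, being the composition of a homeomorphism with a proper map, and it is $n$-invertible: any map $h\colon Z\to X$ from a paracompact space $Z$ with $\dim Z\leq n$ lifts through $g$ to some $\widetilde h\colon Z\to\mu^n\setminus F$, and then $\psi^{-1}\circ\widetilde h$ lifts $h$ through $g\circ\psi$. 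Hence $X$ is a proper $n$-invertible image of $\mu^n\setminus\{pt\}$.

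Apart from the single input that a point is a $Z_n$-set in $\mu^n$, the argument is a formal manipulation of the two preceding results; that input is the only place where genuine content enters, and I expect it to be the (minor) obstacle --- to be dispatched either by a reference or by the $\rm{LCC}^{n-1}$-verification sketched above.
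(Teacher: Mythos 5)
Your argument is correct and follows essentially the same route as the paper: both directions go through Proposition 2.7, with the identification of $\mu^n\backslash F$ and $\mu^n\backslash\{pt\}$ supplied by Corollary 2.5 (the paper) or its consequence Corollary 2.6 (your version), which amount to the same thing. The one input you rightly flag --- that a point of $\mu^n$ is a $Z_n$-set --- is standard and is also implicitly used by the paper, so there is no gap.
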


\begin{proof}
By Theorem 2.4, $\mu^n\backslash F$ is an $\rm{AE}_p(n)$ for every $F\in\rm{AE}(n)$ which is a $Z_n$-set in $\mu^n$. On the other hand,
$\mu^n\backslash F$ satisfies the $\rm{DD^nP}$ as a complement of a $Z_n$-set in $\mu^n$. Hence,
Proposition 2.7 and Corollary 2.5 complete the proof.
\end{proof}

Concerning $\rm{ANE}_p(n)$, arguments similar to the proof of Proposition 2.3 provide the next lemma.
\begin{lem}
If a space $X$ admits a metric $\rm{ANE}(n)$-compactification $\overline X$ such that $\overline X\backslash X$ is an $\rm{AE}(n)$, then $X\in \rm{ANE}_p(n)$.
\end{lem}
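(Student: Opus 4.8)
\emph{Proof sketch.}
The plan is to imitate the proof of the implication $(iii)\Rightarrow(i)$ of Theorem 2.4, the one new feature being that, since we need an extension only over a \emph{neighbourhood}, the Baire-category argument of Proposition 2.3 is replaced by a shrinking argument. Let $f\colon A\to X$ be proper with $A$ closed in a locally compact Lindel\"of space $Y$, $\dim Y\le n$, and put $R=\overline X\setminus X$; since $X$ is locally compact it is open in $\overline X$, so $R$ is a compact metrizable space, $R\in\rm{AE}(n)$, and $\overline X\in\rm{ANE}(n)$. Exactly as in Theorem 2.4, pass to $\beta Y$ and extend $f$ to a map $f_{1}\colon\overline A\to\overline X$ (which is possible because $\overline X$ is compact and $A$, being closed in the normal space $Y$, is $C^{*}$-embedded), with $f_{1}(\overline A\setminus A)\subseteq R$ forced by the properness of $f$; here $\overline A$ denotes the closure of $A$ in $\beta Y$. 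Note $\overline A\setminus A\subseteq\beta Y\setminus Y$ because $A$ is closed in $Y$, and $\beta Y\setminus Y$ is closed in $\beta Y$ because $Y$ is locally compact.

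Next apply the two extensor hypotheses over $\beta Y$, which is compact of dimension $\le n$ (recall $\dim\beta Y=\dim Y$, and that $\dim$ does not increase on closed subspaces of normal spaces). Since $\beta Y\setminus Y$ is a closed subset of $\beta Y$ of dimension $\le n$ and $R\in\rm{AE}(n)$, extend $f_{1}|(\overline A\setminus A)\colon\overline A\setminus A\to R$ to a map $\beta Y\setminus Y\to R$, and paste it with $f_{1}$ to obtain a map $g\colon\overline A\cup(\beta Y\setminus Y)\to\overline X$ with $g|\overline A=f_{1}$ and $g(\beta Y\setminus Y)\subseteq R$. Since $\overline X\in\rm{ANE}(n)$ and $\overline A\cup(\beta Y\setminus Y)$ is closed in $\beta Y$, extend $g$ to a map $\widetilde g\colon W\to\overline X$ over a closed neighbourhood $W$ of $\overline A\cup(\beta Y\setminus Y)$ in $\beta Y$; being closed in the compactum $\beta Y$, the set $W$ is compact, it contains $\beta Y\setminus Y$, and $\widetilde g(W\setminus Y)\subseteq R$.

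It remains to shrink. The set $\widetilde g^{-1}(R)$ is closed in $W$, contains $W\setminus Y$, and is disjoint from $A$ (as $\widetilde g(A)=f(A)\subseteq X$); hence $Z:=\widetilde g^{-1}(R)\cap Y$ is closed in $Y$ and disjoint from $A$, while $N_{0}:=W\cap Y$ is a closed neighbourhood of $A$ in $Y$. By normality of $Y$ pick an open $U$ with $A\subseteq U\subseteq\overline U\subseteq\Int_{Y}N_{0}$ and $\overline U\cap Z=\varnothing$, and set $N=\overline U$. Then $N$ is a closed neighbourhood of $A$ in $Y$, $N\subseteq W$, and $\widetilde g(N)\cap R=\varnothing$, so $\widetilde f:=\widetilde g|N$ is a map of $N$ into $X$ extending $f$. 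It is proper: if $K\subseteq X$ is compact, then $K$ is closed in $\overline X$, so $\widetilde g^{-1}(K)$ is closed in the compactum $W$, hence compact, and $\widetilde f^{-1}(K)=N\cap\widetilde g^{-1}(K)$ is compact; since $N$ and $X$ are locally compact, $\widetilde f$ is therefore perfect. Hence $X\in\rm{ANE}_{p}(n)$.

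The part needing care is not conceptual but the point-set bookkeeping already present in Proposition 2.3 and Theorem 2.4: justifying that the extensor properties of the metrizable spaces $R$ and $\overline X$ may be used over the (in general non-metrizable) compactum $\beta Y$ and its closed subspaces, and passing from a neighbourhood of $\overline A$ in $\beta Y$ to a genuine closed neighbourhood of $A$ in $Y$ over which the extension actually takes values in $X$.
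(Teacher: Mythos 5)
Your proof is correct, and it fills in a step that the paper only gestures at (the paper offers no written proof, saying merely that ``arguments similar to the proof of Proposition 2.3'' suffice). Up to the construction of $\widetilde g$ you follow the paper's template exactly: extend $f$ over $\overline A\subseteq\beta Y$ with the corona $\overline A\setminus A$ forced into $R=\overline X\setminus X$ by properness, use $R\in\mathrm{AE}(n)$ to extend over the compact set $\beta Y\setminus Y$, and then use the extensor property of $\overline X$ over $\beta Y$. Where you genuinely depart from Proposition 2.3 is the last step, and rightly so: the Baire-category argument there (density of the sets $C_i$) needs the remainder to be a $Z_n$-set, a hypothesis absent from Lemma 2.9, whereas for an $\mathrm{ANE}_p(n)$ conclusion one only needs a neighbourhood of $A$, and your normality/shrinking argument --- separating $A$ from the closed set $\widetilde g^{-1}(R)\cap Y$ inside $W\cap Y$ --- delivers exactly that. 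Your verification of properness is also sound, the key point being that $\widetilde g(W\setminus Y)\subseteq R$ forces $\widetilde g^{-1}(K)\subseteq Y$ for compact $K\subseteq X$, so that $N\cap\widetilde g^{-1}(K)$ is a closed subset of a compactum. The one caveat you correctly flag, and which the paper itself relies on silently in Proposition 2.3 and Theorem 2.4, is that the metric $\mathrm{A(N)E}(n)$ properties of $R$ and $\overline X$ must be upgraded to extension properties over the non-metrizable compactum $\beta Y$; this is a known fact for compact metric $\mathrm{A(N)E}(n)$-spaces and paracompact domains of dimension $\le n$, so no gap results.
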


\begin{cor}
$\mathbb R^n$ is an  $\rm{ANE}_p(n)$ and an $\rm{AE}_p(n-1)$, but not an $\rm{AE}_p(n)-space$.
\end{cor}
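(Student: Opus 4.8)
The plan is to establish the three assertions separately. For $\mathbb R^n\in\mathrm{ANE}_p(n)$ I would apply Lemma~2.9 to the one‑point compactification $\omega\mathbb R^n=\mathbb S^n$: being a compact manifold, $\mathbb S^n$ is an ANR and hence an $\mathrm{ANE}(n)$, while the remainder $\mathbb S^n\setminus\mathbb R^n=\{\omega\}$ is a one‑point space, which is (vacuously $\mathrm C^{n-1}$ and $\mathrm{LC}^{n-1}$, hence) an $\mathrm{AE}(n)$; Lemma~2.9 then gives $\mathbb R^n\in\mathrm{ANE}_p(n)$ at once.

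For $\mathbb R^n\in\mathrm{AE}_p(n-1)$ I would treat $n=1$ separately, where the claim is exactly Proposition~2.3 ($\mathbb R$, like every space, is an $\mathrm{AE}_p(0)$). For $n\ge 2$ I would invoke Theorem~2.4 with $n-1$ in place of $n$, taking the metrizable compactification $\widetilde X=\mathbb B^n$ obtained from a homeomorphism of $\mathbb R^n$ onto $\Int\mathbb B^n$, so that the remainder is $\widetilde X\setminus\mathbb R^n=\partial\mathbb B^n=\mathbb S^{n-1}$. Then $\mathbb B^n$ is convex, hence an $\mathrm{AE}$, in particular an $\mathrm{AE}(n-1)$; and $\mathbb S^{n-1}$ is an $(n-2)$‑connected compact ANR, hence $\mathrm C^{n-2}$ and $\mathrm{LC}^{n-2}$, hence an $\mathrm{AE}(n-1)$. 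To see that $\mathbb S^{n-1}$ is a $Z_{n-1}$‑set in $\mathbb B^n$ it suffices, by Lemma~2.1, to check that it is $\mathrm{LCC}^{n-2}$ in $\mathbb B^n$; in fact $\partial\mathbb B^n$ is $\mathrm{LCC}^k$ in $\mathbb B^n$ for every $k$, because for a boundary point $x$ and a small ball $V=B_\varepsilon(x)\cap\mathbb B^n$ the set $V\setminus\partial\mathbb B^n=B_\varepsilon(x)\cap\Int\mathbb B^n$ is an intersection of two open balls, hence convex and contractible in itself. The implication (iii)$\Rightarrow$(i) of Theorem~2.4 then yields $\mathbb R^n\in\mathrm{AE}_p(n-1)$.

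The remaining point, $\mathbb R^n\notin\mathrm{AE}_p(n)$, is where the work lies, and I would prove it by exhibiting a proper map with no proper extension. Let $Y=\biguplus_{m\ge 1}\mathbb B^n_m$ be a countable disjoint union of closed $n$‑balls --- so $\dim Y=n$ and $Y$ is locally compact and $\sigma$‑compact --- and let $A=\biguplus_{m\ge 1}\mathbb S^{n-1}_m\subseteq Y$ be the union of their boundary spheres, a closed set. Identifying each $\mathbb S^{n-1}_m$ with the unit sphere of $\mathbb R^n$, define $f\colon A\to\mathbb R^n$ so that on $\mathbb S^{n-1}_m$ it is the map $x\mapsto mx$; since the preimage of a bounded set meets only finitely many $\mathbb S^{n-1}_m$, $f$ is proper. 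If $\widetilde f\colon Y\to\mathbb R^n$ were a proper extension, then for each $m$ the restriction $g=\widetilde f|\mathbb B^n_m\colon\mathbb B^n\to\mathbb R^n$ coincides with $x\mapsto mx$ on $\mathbb S^{n-1}$, and a standard degree argument shows $\{y:|y|<m\}\subseteq g(\mathbb B^n)$: if some such $y$ were missed, the map $(g-y)/|g-y|\colon\mathbb B^n\to\mathbb S^{n-1}$ would be defined and restrict on $\mathbb S^{n-1}$ to a map homotopic, via $t\mapsto(mx-ty)/|mx-ty|$, to the identity of $\mathbb S^{n-1}$ --- impossible. Hence $0\in\widetilde f(\mathbb B^n_m)$ for every $m$, so $\widetilde f^{-1}(0)$ is a closed subset of $Y$ meeting all the clopen pieces $\mathbb B^n_m$ and is therefore non‑compact, contradicting properness. (Alternatively, by Theorem~2.4 it would suffice to note that $\{\omega\}$ is not a $Z_n$‑set in $\mathbb S^n$: by Lemma~2.1 that would force $\{\omega\}$ to be $\mathrm{LCC}^{n-1}$ in $\mathbb S^n$, which fails since a punctured neighborhood of $\omega$ is homotopy equivalent to $\mathbb S^{n-1}$ and the linking sphere generates its $(n-1)$‑st homotopy group.) The crux --- and the only genuine difficulty --- is this last step: one needs an honestly $n$‑dimensional witness together with the degree‑theoretic observation that a boundary map of non‑zero degree onto the sphere of radius $m$ forces its filling disk to engulf the entire ball of radius $m$, hence the fixed point $0$ for all $m$, which cannot coexist with compactness of $\widetilde f^{-1}(0)$.
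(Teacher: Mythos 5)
Your proposal is correct; all three assertions are established soundly. It coincides with the paper on the $\mathrm{ANE}_p(n)$ part (Lemma~2.9 applied to $\omega\mathbb R^n=\mathbb S^n$ with remainder a point), but diverges on the other two. For $\mathbb R^n\in\mathrm{AE}_p(n-1)$ the paper simply invokes condition (ii) of Theorem~2.4: $\mathbb S^n\in\mathrm{AE}(n-1)$ and a point of $\mathbb S^n$ is a $Z_{n-1}$-set; you instead use condition (iii) with the compactification $\mathbb B^n$ and remainder $\mathbb S^{n-1}$, which costs you the extra (correct) verifications that $\mathbb S^{n-1}\in\mathrm{AE}(n-1)$ and is $\mathrm{LCC}^{n-2}$ in $\mathbb B^n$, and obliges the separate (also correct) treatment of $n=1$ via Proposition~2.3; the one-point compactification is the more economical choice here. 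For the negative part the paper's argument is one line --- a point of $\mathbb S^n$ is not a $Z_n$-point, so Theorem~2.4(ii) (equivalently Lemma~2.2) fails --- which is essentially your parenthetical alternative; your primary argument is a self-contained witness: a disjoint union of $n$-balls with boundary data $x\mapsto mx$, where a Brouwer-degree argument forces any continuous extension to hit $0$ in every ball, killing properness. That construction is in effect an unwinding of the proof of Lemma~2.2 combined with an explicit verification that the linking spheres are essential, so it is more elementary and does not lean on the $Z_n$-set machinery, at the price of being longer; the paper's route buys brevity by reusing Theorem~2.4. Both are valid.
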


\begin{proof}
It follows from Theorem 2.4 that $\mathbb R^n$ is an $\rm{AE}_p(n-1)$, but not an $\rm{AE}_p(n)$-space because $\omega\mathbb R^n=\mathbb S^n$ and any point of $\mathbb S^n$ is a $Z_{n-1}$-point in $\mathbb S^n$ but not a $Z_n$-point. On the other hand, $\mathbb R^n\in\rm{ANE}_p(n)$ according to Lemma 2.9.
\end{proof}


\section{Non-metrizable $\rm{AE}_p(n)$-spaces}
In this section all spaces are locally compact and Lindel\"{o}f.
A map $f:X\to Y$ is called {\em $n$-soft} \cite{sc} if for every $n$-dimensional paracompact space $Z$, any closed set $A\subset Z$ and any two maps $h:A\to X$ and $g:Z\to Y$ with $g|A=f\circ h$ there is a continuous extension $\widetilde h:Z\to X$ of $h$ such that $g=f\circ\widetilde h$.
We say that $f:X\to Y$ is a {\em map with a Polish kernel} if there is a Polish (i.e., completely metrizable and separable) space $P$ such that $X$ is $C$-embedded in $Y\times P$ and $f=\pi_Y|X$, where $\pi_Y:Y\times P\to Y$ is the projection.

The next lemma follows from the corresponding definitions.
\begin{lem}
Let $f:X\to Y$ is a proper $n$-soft map . Then $X\in\rm{AE}_p(n)$ if and only if $Y\in\rm{AE}_p(n)$.
\end{lem}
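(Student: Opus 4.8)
The plan is to handle the two implications separately, in each case transferring an extension problem over one of the spaces into an extension problem over the other. For $Y\in\rm{AE}_p(n)\Rightarrow X\in\rm{AE}_p(n)$ I would push the given map down by composing with $f$, solve the extension problem downstairs, and then lift the resulting extension back up using the $n$-softness of $f$; for the reverse implication I would first lift the given map up using $n$-invertibility (the $A=\varnothing$ case of $n$-softness), solve the extension problem upstairs, and then push the resulting extension down by composing with $f$. Two elementary observations are used repeatedly: (a) every locally compact Lindel\"{o}f space is $\sigma$-compact, hence paracompact and normal, so the test domains occurring in the definition of $\rm{AE}_p(n)$ are legitimate inputs for the definition of an $n$-soft map; and (b) if $\varphi$ and $\psi$ are composable continuous maps of locally compact Hausdorff spaces and $\psi\circ\varphi$ is proper, then $\varphi$ is proper --- indeed, for compact $K$ the set $\varphi^{-1}(K)$ is a closed subset of $(\psi\circ\varphi)^{-1}(\psi(K))$, which is compact since $\psi(K)$ is compact. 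In particular a composition of proper maps is proper.

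Assume first that $Y\in\rm{AE}_p(n)$ and let $g\colon A\to X$ be a proper map, where $A$ is closed in a locally compact Lindel\"{o}f space $Z$ with $\dim Z\le n$. Then $f\circ g\colon A\to Y$ is proper, hence it extends to a proper map $\widetilde h\colon Z\to Y$. Now apply the $n$-softness of $f$ to the closed set $A\subset Z$, the map $h:=g\colon A\to X$, and the map $\widetilde h\colon Z\to Y$, noting that $\widetilde h|A=f\circ g=f\circ h$: this yields $\widetilde g\colon Z\to X$ with $\widetilde g|A=g$ and $f\circ\widetilde g=\widetilde h$. Since $\widetilde h$ is proper, observation (b) shows $\widetilde g$ is proper. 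Thus $g$ admits a proper extension over $Z$, so $X\in\rm{AE}_p(n)$.

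Conversely, assume $X\in\rm{AE}_p(n)$ and let $g\colon A\to Y$ be a proper map with $A$ closed in a locally compact Lindel\"{o}f space $Z$, $\dim Z\le n$. Being $n$-soft, $f$ is $n$-invertible (apply its definition with empty closed set); since $A$, as a closed subset of the paracompact space $Z$, is itself paracompact of dimension $\le n$, we obtain $g_1\colon A\to X$ with $f\circ g_1=g$. By (b), $g_1$ is proper. Using $X\in\rm{AE}_p(n)$, extend $g_1$ to a proper map $\widetilde g_1\colon Z\to X$. Then $f\circ\widetilde g_1\colon Z\to Y$ is proper and restricts to $f\circ g_1=g$ on $A$, so it is the desired proper extension of $g$; hence $Y\in\rm{AE}_p(n)$.

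I expect no serious obstacle: the only delicate aspect is bookkeeping --- at each step one must verify that the space serving as a test domain really is paracompact of dimension at most $n$ (automatic here, since locally compact Lindel\"{o}f spaces are $\sigma$-compact) and that each newly produced map is proper, which is precisely what observation (b) supplies. One minor point worth flagging is that $n$-invertibility of $f$ forces $f$ to be surjective (test it on a one-point space), so the lift $g_1$ in the second implication is genuinely available; if surjectivity is already built into the definition of ``$n$-soft'', this remark is superfluous.
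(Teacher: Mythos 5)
Your proof is correct, and it is precisely the argument the paper has in mind: the paper states only that the lemma ``follows from the corresponding definitions,'' and your write-up supplies exactly those routine details (composing/lifting via $n$-softness, and the two properness observations). No gaps.
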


An inverse system $\displaystyle S=\{X_\alpha, p^{\beta}_\alpha, A\}$ is said to be {\em $\sigma$-complete} if all $X_\alpha$ are second countable spaces and every increasing sequence $\{\alpha_n\}\subset A$ has a supremum $\alpha$ in $A$ such that $X_\alpha$ is the limit space of the inverse sequence $\displaystyle \{X_{\alpha_n}, p^{\alpha_{n+1}}_{\alpha_n}, n\geq 1\}$. If $S$ is well-ordered and $X_\alpha$ is the limit of the inverse system $\displaystyle \{X_\beta, p^{\beta+1}_\beta, \beta<\alpha\}$ for every limit ordinal $\alpha\in A$, then $S$ is called a {\em continuous inverse system}.

Now, we can describe the non-metrizable $\rm{AE}_p(n)$-spaces.

\begin{thm}
For every $n\geq 1$ the following conditions are equivalent :
\begin{itemize}
\item[(i)] $X$ is an  $\rm{AE}_p(n)$-space of weight $\tau$;
\item[(ii)] $\omega X\in\rm{AE}(n)$;
\item[(iii)] $X$ is the limit space of a continuous inverse system  $\displaystyle S=\{X_\alpha, p^{\beta}_\alpha, \tau\}$ such that
all $X_\alpha$ are $\rm{AE}_p(n)$-spaces, $X_1$ is a locally compact separable metric space and the projections $p^{\alpha+1}_\alpha$ are perfect $n$-soft maps with metrizable kernels;
\item[(iv)] $X$ is the limit space of a $\sigma$-complete inverse system  $\displaystyle S=\{X_\alpha, p^{\beta}_\alpha\}$ consisting of
$\rm{AE}_p(n)$-spaces $X_\alpha$ and perfect $n$-soft projections $p_\alpha:X\to X_\alpha$.
\end{itemize}
\end{thm}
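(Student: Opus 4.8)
The plan is to prove the cycle $(i)\Rightarrow(ii)\Rightarrow(iii)\Rightarrow(i)$ and, alongside it, $(iii)\Leftrightarrow(iv)$. The implication $(iii)\Rightarrow(i)$ is a transfinite version of the extension argument of Theorem 2.4: given a proper $f\colon A\to X$ with $A$ closed in a locally compact Lindel\"of (hence paracompact) space $Y$, $\dim Y\le n$, I would extend $f$ recursively along $S=\{X_\alpha,p^{\beta}_\alpha,\tau\}$ --- first extend $p_1\circ f$ to a proper $\widetilde f_1\colon Y\to X_1$ since $X_1\in\rm{AE}_p(n)$; at a successor step use $n$-softness of $p^{\alpha+1}_\alpha$ to lift $p_{\alpha+1}\circ f$ to a map $\widetilde f_{\alpha+1}\colon Y\to X_{\alpha+1}$ over the already built $\widetilde f_\alpha$, properness being preserved because $p^{\alpha+1}_\alpha$ is perfect; at limit ordinals pass to the induced map into the limit space, again proper because the bonds are perfect. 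The diagonal is a proper extension of $f$. For $(iii)\Leftrightarrow(iv)$ one passes between the given $\tau$-indexed continuous system and a $\sigma$-complete resolution of $X$, the projections in either direction inheriting perfectness and $n$-softness from the other (using Shchepin's theorem on limits of $n$-soft maps and the spectral theorem), and the factors staying $\rm{AE}_p(n)$ because they are limits of subsystems covered by $(iii)\Rightarrow(i)$.

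The core is $(i)\Rightarrow(ii)$, which I would model on the resolution argument of Theorem 2.4 but with a weight-$\tau$ analogue of Dranishnikov's resolution: an $n$-invertible surjection $d\colon M\to I^\tau$ from an at most $n$-dimensional $\rm{AE}(n)$-compactum $M$ of weight $\tau$, with $\omega X$ embedded in the Tychonoff cube $I^\tau$. One cannot simply restrict $d$ over $\omega X$ as in the metric case, because $M\setminus d^{-1}(\{\omega\})$ need not be Lindel\"of; this is precisely where the hypotheses on $X$ must be used. Being locally compact and Lindel\"of, $X$ is $\sigma$-compact, so $\omega$ is a $G_\delta$-point of $\omega X$ and hence a zero-set, and there is a continuous $g\colon I^\tau\to[0,1]$ with $g^{-1}(0)\cap\omega X=\{\omega\}$. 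Put $F=g^{-1}(0)$ and $M'=M\setminus d^{-1}(F)$: then $M'$ is open in the compactum $M$ (so locally compact) and $F_\sigma$ in $M$ (so $\sigma$-compact, hence Lindel\"of), $\dim M'\le n$, $d^{-1}(X)$ is closed in $M'$, and $d|d^{-1}(X)\colon d^{-1}(X)\to X$ is proper. By $(i)$ this extends to a proper $h\colon M'\to X$, which extends further to $\widetilde h\colon M\to\omega X$ with $\widetilde h(d^{-1}(F))=\{\omega\}$; as in Theorem 2.4, the $n$-invertibility of $d$ forces $\widetilde h$ to be $n$-invertible, and $M\in\rm{AE}(n)$ then yields $\omega X\in\rm{AE}(n)$.

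For $(ii)\Rightarrow(iii)$ I would feed $\omega X\in\rm{AE}(n)$ into the Shchepin--Chigogidze spectral characterization of $\rm{AE}(n)$-compacta, writing $\omega X$ as the limit of a continuous system $\{Z_\alpha,q^{\beta}_\alpha,\tau\}$ of metrizable $\rm{AE}(n)$-compacta whose short projections $q^{\alpha+1}_\alpha$ are $n$-soft surjections with metrizable kernels. Since the thread $\omega=(\omega_\alpha)$ is $G_\delta$, there is $\alpha_0$ with $q_{\alpha_0}^{-1}(\{\omega_{\alpha_0}\})=\{\omega\}$; restarting the system at $\alpha_0$ we may assume $(q^{\gamma}_\beta)^{-1}(\omega_\beta)=\{\omega_\gamma\}$ throughout, and after a cofinal shift governed by the spectral behaviour of $Z_n$-sets, that every $\{\omega_\alpha\}$ is a $Z_n$-set in $Z_\alpha$. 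Then $X_\alpha:=Z_\alpha\setminus\{\omega_\alpha\}$ exhibits $X$ as the limit of $\{X_\alpha,p^{\beta}_\alpha,\tau\}$ with $p^{\beta}_\alpha=q^{\beta}_\alpha|X_\beta$; the short projections are perfect (restrictions of perfect maps to full preimages), $n$-soft (restricting an $n$-soft map between compacta to the complements of corresponding $Z_n$-sets preserves $n$-softness), and have metrizable kernels, while each metric $X_\alpha$ lies in $\rm{AE}_p(n)$ by Theorem 2.4 and each limit $X_\alpha$ lies in $\rm{AE}_p(n)$ by $(iii)\Rightarrow(i)$ applied to its subsystem.

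The main obstacle I anticipate is the locally-compact-plus-Lindel\"of bookkeeping in $(i)\Rightarrow(ii)$: arranging the zero-set $F$ so that the ambient space $M'$ is simultaneously locally compact and Lindel\"of of dimension $\le n$, so that the defining property of $\rm{AE}_p(n)$ becomes applicable --- without the $G_\delta$-point trick one would be stuck with a non-Lindel\"of open subset of $M$. A secondary difficulty is supplying the spectral input for $(ii)\Rightarrow(iii)$, namely arranging the projected points $\omega_\alpha$ to be $Z_n$-sets cofinally and checking that deleting them preserves $n$-softness of the bonds. (If $(ii)$ is understood, as in Theorem 2.4, to also require $\{\omega\}$ to be a $Z_n$-set in $\omega X$, this is immediate from the spectral picture, a $G_\delta$-point of a non-metrizable $\rm{AE}(n)$-compactum being automatically a $Z_n$-set.)
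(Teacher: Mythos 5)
There is a genuine gap at the heart of your argument for $(i)\Rightarrow(ii)$: the ``weight-$\tau$ analogue of Dranishnikov's resolution'' you postulate --- an $n$-invertible surjection $d\colon M\to \mathbb I^\tau$ from an \emph{at most $n$-dimensional $\mathrm{AE}(n)$-compactum} $M$ of weight $\tau$ --- does not exist when $\tau$ is uncountable and $n\geq 1$. By Dranishnikov's theorem (cited in the paper right after Theorem 3.2, and recorded as Corollary 3.3), every $\mathrm{AE}(n)$-compactum of dimension $\leq n$ with $n\geq 1$ is metrizable. You genuinely need both properties: $\dim M\leq n$ so that $M'=M\setminus d^{-1}(F)$ is an at most $n$-dimensional locally compact Lindel\"{o}f space to which the defining property of $\mathrm{AE}_p(n)$ applies, and $M\in\mathrm{AE}(n)$ for the final inference ``$\widetilde h$ is $n$-invertible and $M\in\mathrm{AE}(n)$, hence $\omega X\in\mathrm{AE}(n)$.'' These two requirements are jointly unsatisfiable in the non-metrizable case, which is the only case the theorem adds to Theorem 2.4. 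The paper circumvents this by using Dranishnikov's compactum $D_n^\tau$, which is an $n$-dimensional $\mathrm{AE}(n-1)$-compactum (not $\mathrm{AE}(n)$) of weight $\tau$ admitting an $n$-invertible \emph{and $(n-1)$-soft} map $f_n^\tau\colon D_n^\tau\to\mathbb I^\tau$; after building the proper extension $g\colon Y\to X$ and its extension $\widetilde g\colon D_n^\tau\to\omega X$ exactly as you do, it does not argue via invertible images of extensors at all, but forms the set-valued map $r(x)=\widetilde g\bigl((f_n^\tau)^{-1}(x)\bigr)$, observes that the $(n-1)$-softness of $f_n^\tau$ makes $r$ a projectively $(n-1)$-cosoft retraction of $\mathbb I^\tau$ onto $\omega X$, and invokes Dranishnikov's characterization of $\mathrm{AE}(n)$-compacta in those terms. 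That extra layer is not optional bookkeeping; it is the substitute for the nonexistent resolution your plan relies on.

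The remainder of your outline is broadly consonant with the paper. Your identification of the Lindel\"{o}f issue and the closed-$G_\delta$-set $F$ with $F\cap\omega X=\{\omega\}$ is exactly the paper's device; your $(ii)\Rightarrow(iii)$ via the spectral theorem and deletion of the point-threads matches the paper's (and your insistence on getting the limit factors into $\mathrm{AE}_p(n)$ via the already-proved implication, rather than via $(i)$, avoids a circularity the paper glosses over with Lemma 3.1). Your transfinite lifting for $(iii)\Rightarrow(i)$ works but is heavier than needed: since the long projection $p_1\colon X\to X_1$ is a perfect $n$-soft map onto an $\mathrm{AE}_p(n)$-space, a single application of Lemma 3.1 finishes both $(iii)\Rightarrow(i)$ and $(iv)\Rightarrow(i)$, which is all the paper does. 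But none of this repairs the core step.
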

\begin{proof}
Let $X$ be an $\rm{AE}_p(n)$-space of weight $\tau$ and embed $\omega X$ in the Tychonoff cube $\mathbb I^\tau$. According to \cite{dr1}, there exists a compact $AE(n-1)$-space $D_n^\tau$ of dimension $n$ and weight $\tau$ and an $n$-invertible $(n-1)$-soft-map $f_n^\tau:D_n^\tau\to\mathbb I^\tau$. Since $\{\omega\}$ is a $G_\delta$-set in $\omega X$, there is a closed $G_\delta$-set $F\subset\mathbb I^\tau$ with $F\cap\omega X=\{\omega\}$. Deleting the interior of $F$, if necessary, we can suppose that $F$ is nowhere dense in $\mathbb I^\tau$. Then $(f_n^\tau)^{-1}(F)$ is a
closed nowhere dense and $G_\delta$-subset of $D_n^\tau$ because $f_n^\tau$ is open (as a $0$-soft map between $\rm{AE}(0)$-spaces, see \cite{chi1}). So,
 $Y=D_n^\tau\backslash (f_n^\tau)^{-1}(F)$ is a dense locally compact Lindel\"{o}f subset of $D_n^\tau$ containing $(f_n^\tau)^{-1}(X)$ as a closed subset. Since $X\in\rm{AE}_p(n)$, there is a proper map $g:Y\to X$ extending the restriction $f_n^\tau|(f_n^\tau)^{-1}(X)$. Finally, extend $g$ to a map $\widetilde g:D_n^\tau\to\omega X$. Now, consider the set valued map $r:\mathbb I^\tau\rightsquigarrow\omega X$, $r(x)=\widetilde g((f_n\tau)^{-1}(x))$. Obviously, $r(x)=\{x\}$ for every $x\in\omega X$. Since $f_n^\tau$ is $(n-1)$-soft, $r$ is projectively $(n-1)$-cosoft retraction in the sense of Dranishnikov \cite{dr1}. Hence, by \cite[Theorem 4.2]{dr1}, $\omega X$ is an $\rm{AE}(n)$-space. So, $(i)\Rightarrow (ii)$.

 If $\omega X\in\rm{AE}(n)$, then $\omega X$ is the limit a continuous inverse system  $\displaystyle\widetilde S=\{\widetilde X_\alpha, \widetilde p^{\beta}_\alpha, \tau\}$ such that $\widetilde X_1$ is a point and all projections $\widetilde p^{\alpha+1}_\alpha$ are $n$-soft maps with metrizable kernels, see \cite[Theorem 4.2]{dr1}. Because $\{\omega\}$ is a $G_\delta$-set in $\omega X$, there is $\alpha_0<\tau$ such that $\widetilde X_{\alpha_0}$ is metrizable and
 $\widetilde p_{\alpha_0}^{-1}(\widetilde p_{\alpha_0}(\{\omega\}))=\{\omega\}$. Consequently, $\widetilde p_{\alpha}^{-1}(X_\alpha)=X$ for every $\alpha\geq\alpha_0$, where $X_\alpha=\widetilde X_{\alpha}\backslash\widetilde p_{\alpha}(\{\omega\})$. Obviously, all restrictions
 $p_\alpha=\widetilde p_\alpha|X$ and $p^{\beta}_\alpha=\widetilde p^{\beta}_\alpha|X_\beta$ are perfect $n$-soft maps and $X$ is the limit of the inverse system
  $\displaystyle S=\{X_\alpha, p^{\beta}_\alpha, \alpha\geq\alpha_0\}$. Finally, by Lemma 3.1, each $X_\alpha$ is an $\rm{AE}_p(n)$. This complete the implication $(ii)\Rightarrow (iii)$.

  The implication $(iii)\Rightarrow (iv)$ follows by similar arguments using that $\omega X$ (as an $\rm{AE}(n)$-compactum) is the limit space of
a $\sigma$-complete inverse system  $\displaystyle\widetilde S=\{\widetilde X_\alpha, \widetilde p^{\beta}_\alpha\}$ consisting of
$\rm{AE}(n)$-metric compacta $\widetilde X_\alpha$ and perfect $n$-soft projections $\widetilde p_\alpha:X\to X_\alpha$, see \cite[Theorem 4.2]{dr1}.
Finally, since $X$ admits $n$-soft perfect maps into $\rm{AE}_p(n)$-spaces, the implication $(iv)\Rightarrow (i)$ follows from Lemma 3.1.
\end{proof}
Because every $\rm{AE}(n)$-compactum of dimension $\leq n$, where $n\geq 1$, is metrizable \cite[Theorem 4.4]{dr1}, Theorem 3.2 implies the following
\begin{cor}
Every $\rm{AE}_p(n)$-space $X$ with $n\geq 1$ is metrizable provided $\dim X\leq n$.
\end{cor}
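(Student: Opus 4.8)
The plan is to pass to the one-point compactification and combine Theorem 3.2 with the quoted metrizability theorem for low-dimensional $\rm{AE}(n)$-compacta. So let $X$ be an $\rm{AE}_p(n)$-space with $n\geq 1$ and $\dim X\leq n$. By definition $X$ is locally compact, and (by the conventions of this section) Lindel\"{o}f, hence $\sigma$-compact: covering $X$ by open sets with compact closure and extracting a countable subcover, we get $X=\bigcup_{i\geq 1}K_i$ with each $K_i$ compact.

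First I would record that $\dim\omega X\leq n$. The sets $\{\omega\}$ and $K_i$ are closed subsets of the compact (in particular normal) space $\omega X$, each of dimension $\leq n$, and $\omega X=\{\omega\}\cup\bigcup_{i\geq 1}K_i$. The countable closed sum theorem for covering dimension then gives $\dim\omega X\leq n$. It is essential here that $X$ be $\sigma$-compact, i.e.\ $F_\sigma$ in $\omega X$, so that the pieces of the cover are closed in $\omega X$; the bare addition inequality would only yield $\dim\omega X\leq n+1$, which would not suffice.

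Next I would invoke Theorem 3.2: since $X\in\rm{AE}_p(n)$, the implication $(i)\Rightarrow(ii)$ shows that $\omega X\in\rm{AE}(n)$. Combined with the previous paragraph, $\omega X$ is an $\rm{AE}(n)$-compactum of dimension $\leq n$ with $n\geq 1$, so by \cite[Theorem 4.4]{dr1} it is metrizable. Finally, $X$ is a subspace of the metrizable space $\omega X$, hence $X$ itself is metrizable.

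I do not expect any genuine obstacle here; the argument is essentially a one-line deduction from Theorem 3.2. The only point that is not purely formal is the dimension estimate $\dim\omega X\leq n$, and even that is routine once $\sigma$-compactness of $X$ is used to make the members of the covering family closed in $\omega X$ and the countable sum theorem is applied.
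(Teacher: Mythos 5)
Your argument is correct and is exactly the paper's: the paper derives the corollary in one line from Theorem 3.2 (i)$\Rightarrow$(ii) together with the fact that every $\rm{AE}(n)$-compactum of dimension $\leq n$, $n\geq 1$, is metrizable. Your additional justification that $\dim\omega X\leq n$ via $\sigma$-compactness and the countable closed sum theorem is a correct filling-in of a detail the paper leaves implicit.
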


Concerning $\rm{AE}_p(0)$-spaces we have the following:
\begin{thm}
The following conditions are equivalent:
\begin{itemize}
\item[(i)] $X$ is an $\rm{AE}_p(0)$-space;
\item[(ii)] $X\in\rm{AE}(0)$;
\item[(iii)] $\omega X\in\rm{AE}(0)$;
\item[(iv)] $X$ is the limit space of a  $\sigma$-complete inverse system  $\displaystyle S=\{X_\alpha, p^{\beta}_\alpha\}$ consisting of
locally compact separable metric spaces $X_\alpha$ and perfect $0$-soft projections $p_\alpha:X\to X_\alpha$.
\end{itemize}
\end{thm}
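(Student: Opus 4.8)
The plan is to establish the cycle $(i)\Rightarrow(iii)\Rightarrow(iv)\Rightarrow(i)$, imitating in the degenerate dimension $n=0$ the proofs of Proposition 2.3 and of Theorems 2.4 and 3.2, and then to attach $(ii)$ through the equivalence $(ii)\Leftrightarrow(iii)$.

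For $(i)\Rightarrow(iii)$ I would copy the argument for $(i)\Rightarrow(ii)$ of Theorem 3.2. Embed $\omega X$ in $\mathbb I^\tau$, $\tau=w(X)$, and use the $0$-dimensional instance of Dranishnikov's resolution: an open (hence $0$-soft, hence $0$-invertible) surjection $f_0^\tau\colon D_0^\tau\to\mathbb I^\tau$ of a $0$-dimensional $\rm{AE}(0)$-compactum $D_0^\tau$ of weight $\tau$ onto $\mathbb I^\tau$ (the case $n=0$ of \cite{dr1}, in which ``$(n-1)$-softness'' becomes vacuous). Pick a closed nowhere dense $G_\delta$-set $F\subset\mathbb I^\tau$ with $F\cap\omega X=\{\omega\}$; openness of $f_0^\tau$ makes $(f_0^\tau)^{-1}(F)$ a closed nowhere dense $G_\delta$-set, so $Y=D_0^\tau\setminus(f_0^\tau)^{-1}(F)$ is locally compact and Lindel\"of and contains $(f_0^\tau)^{-1}(X)$ as a closed subset. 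Since $X\in\rm{AE}_p(0)$, the proper map $f_0^\tau|(f_0^\tau)^{-1}(X)$ has a proper extension $g\colon Y\to X$; extend $g$ to $\widetilde g\colon D_0^\tau\to\omega X$ with $\widetilde g((f_0^\tau)^{-1}(\{\omega\}))=\{\omega\}$ and set $r(x)=\widetilde g((f_0^\tau)^{-1}(x))$, $r\colon\mathbb I^\tau\rightsquigarrow\omega X$. Then $r$ restricts to the identity on $\omega X$ and, $f_0^\tau$ being $0$-soft, $r$ is a projectively cosoft retraction, so $\omega X\in\rm{AE}(0)$ by \cite[Theorem 4.2]{dr1}.

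The other implications are routine. For $(iii)\Rightarrow(iv)$ I would repeat $(ii)\Rightarrow(iii)$ of Theorem 3.2: by \cite[Theorem 4.2]{dr1} write $\omega X$ as the limit of a $\sigma$-complete inverse system $\{\widetilde X_\alpha,\widetilde p^{\beta}_\alpha\}$ of metrizable $\rm{AE}(0)$-compacta with perfect $0$-soft bonding maps; as $X$ is $\sigma$-compact, $\{\omega\}$ is a $G_\delta$-set in $\omega X$, so $\widetilde p_{\alpha_0}^{-1}(\widetilde p_{\alpha_0}(\{\omega\}))=\{\omega\}$ for some $\alpha_0$, and deleting the point $\widetilde p_\alpha(\{\omega\})$ from $\widetilde X_\alpha$ for $\alpha\geq\alpha_0$ exhibits $X$ as the limit of a $\sigma$-complete system of locally compact separable metric spaces with perfect $0$-soft projections. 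For $(iv)\Rightarrow(i)$: every factor $X_\alpha$ is locally compact separable metric, hence $X_\alpha\in\rm{AE}_p(0)$ by Proposition 2.3, and each projection $p_\alpha\colon X\to X_\alpha$ is a proper (being perfect between locally compact spaces) $0$-soft map, so $X\in\rm{AE}_p(0)$ by Lemma 3.1. Finally $(iii)\Rightarrow(ii)$ holds because $X$ is open in $\omega X\in\rm{AE}(0)$, open subsets of $\rm{AE}(0)$-spaces are $\rm{ANE}(0)$, and a nonempty $\rm{ANE}(0)$-space is an $\rm{AE}(0)$-space (a closed subset of a $0$-dimensional normal space has a clopen neighbourhood, off which one extends by a constant value); and $(ii)\Rightarrow(iii)$ follows from the $\sigma$-compactness of $X$, controlling by a compact exhaustion of $X$ the behaviour near $\omega$ of maps of $0$-dimensional spaces into $\omega X$ (equivalently, it is the $n=0$ instance of the passage from $\rm{A(N)E}(n)$-spaces to their one-point compactifications).

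I expect the genuine obstacle to be $(i)\Rightarrow(iii)$, and within it the legitimacy of its ingredients in the boundary case $n=0$: one has to check that Dranishnikov's resolution $f_0^\tau\colon D_0^\tau\to\mathbb I^\tau$ exists and that his characterization of $\rm{AE}(n)$-compacta through projectively cosoft retractions remains valid for $n=0$ (with the obvious simplifications, the lower indices being vacuous), or otherwise supply a short direct argument in that case; the rest is a transcription of Proposition 2.3, Lemma 3.1 and Theorem 3.2.
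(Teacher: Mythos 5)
Your proposal is correct in outline and its skeleton $(iv)\Rightarrow(i)$ (Lemma 3.1 plus Proposition 2.3) and the restriction-over-$X$ argument for producing the $\sigma$-complete system coincide with the paper's, but for the substantive implication you take a genuinely different route, and the soft spot you flag yourself is exactly where the paper diverges. You try to prove $(i)\Rightarrow(iii)$ by running the $n=0$ instance of Theorem 3.2: Dranishnikov's resolution $D_n^\tau\to\mathbb I^\tau$ plus his projectively-cosoft-retraction criterion for $\rm{AE}(n)$-compacta. Those tools are calibrated for $n\geq 1$ (for $n=0$ the ``$(n-1)$-soft/cosoft'' indices degenerate, and the cited \cite[Theorem 4.2]{dr1} is not the statement one wants), which is presumably why the paper does not use them here: it proves $(i)\Rightarrow(ii)$ directly by replacing Dranishnikov's resolution with Hoffman's perfect $0$-invertible surjection $f_0^\tau\colon D^\tau\to\mathbb I^\tau$ from the Cantor cube, observes that $Y=D^\tau\setminus(f_0^\tau)^{-1}(F)$ is $\rm{AE}(0)$ (being the $\pi$-preimage of a locally compact subset of $D^{\aleph_0}$ under the $0$-soft projection $\pi\colon D^\tau\to D^{\aleph_0}$), and concludes that $X$, as a proper $0$-invertible image of $Y$, is $\rm{AE}(0)$ --- no retraction criterion needed. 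It then gets $(ii)\Rightarrow(iii)$ by citing \cite[Proposition 3.9]{chi2}, and $(iii)\Rightarrow(iv)$ from Haydon's spectral characterization of compact $\rm{AE}(0)$-spaces rather than from \cite{dr1}. So your cycle closes only if you carry out the repair you yourself anticipate; the ``short direct argument in that case'' is precisely the Hoffman/Haydon substitution. One further caution: your parenthetical proof of $(iii)\Rightarrow(ii)$ via ``open subsets of $\rm{AE}(0)$-spaces are $\rm{ANE}(0)$'' is fine for the metric theory, but in the non-metrizable $\rm{AE}(0)$ (Dugundji-type) setting this step needs the specific fact that $X$ is a $\sigma$-compact open subset of $\omega X$; that is again essentially the content of the cited result of Chigogidze, which the paper invokes only in the direction $(ii)\Rightarrow(iii)$ because its cycle is $(i)\Rightarrow(ii)\Rightarrow(iii)\Rightarrow(iv)\Rightarrow(i)$ and never needs $(iii)\Rightarrow(ii)$ separately.
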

\begin{proof}
Let $X$ be an $\rm{AE}_p(0)$-space of weight $\tau$ and embed $\omega X$ in the Tychonoff cube $\mathbb I^\tau$. By \cite{ho}, $\mathbb I^\tau$ is an image of the Cantor cube $D^\tau$ under a perfect $0$-invertible map $f_0^\tau$. As in the proof of Theorem 3.2, take a closed $G_\delta$-set
$F\subset\mathbb I^\tau$ with $F\cap\omega X=\{\omega\}$ and let $Y=D^\tau\backslash (f_0^\tau)^{-1}(F)$. Then $Y$, as a locally compact Lindel\"{o}f subset of $D^\tau$, is an $\rm{AE}(0)$-space. Indeed, there is a locally compact subset $Y_0$ of the Cantor set $D^{\aleph_0}$ with $\pi^{-1}(Y_0)=Y$, where $\pi:D^\tau\to D^{\aleph_0}$ is the projection. Since $\pi$ is $0$-soft and $Y_0\in\rm{AE}(0)$, $Y\in\rm{AE}(0)$. Next, consider a proper map $g:Y\to X$ extending the restriction $f_0^\tau|(f_0\tau)^{-1}(X)$. Then $g$ is also $0$-invertible, hence $X\in\rm{AE}(0)$ because $Y\in\rm{AE}(0)$. Therefore, $(i)\Rightarrow (ii)$. The implication $(ii)\Rightarrow (iii)$ is well known, see \cite[Proposition 3.9]{chi2}. For the  implication
$(iii)\Rightarrow (iv)$, observe that Haydon's \cite{ha} spectral characterization of compact $\rm{AE}(0)$-spaces implies that $\omega X$ is the limit of a $\sigma$-complete inverse system  $\displaystyle \widetilde S=\{\widetilde X_\alpha, \widetilde p^{\beta}_\alpha\}$ consisting of
compact metric spaces $\widetilde X_\alpha$ and perfect $0$-soft projections $\widetilde p_\alpha:X\to X_\alpha$. Since $\{\omega\}$ is a $G_\delta$-subset of $\omega X$, the restriction of $\widetilde S$ over $X$ provides a $\sigma$-complete inverse system  $\displaystyle S=\{X_\alpha, p^{\beta}_\alpha\}$ consisting of
locally compact separable metric spaces $X_\alpha$ and perfect $0$-soft projections $p_\alpha:X\to X_\alpha$ such that $X$ is the limit of $S$ (see the proof of Theorem 3.2). The implication $(iv)\Rightarrow (i)$ follows from Lemma 3.1 and Proposition 2.3 because $X$ admits a $0$-soft map into a separable locally compact metric space $X_\alpha$.
\end{proof}

\end{document}